\numberwithin{equation}{section}
\DeclareFontFamily{OT1}{pzc}{}
\DeclareFontShape{OT1}{pzc}{m}{it}{<-> s * [1.2] pzcmi7t}{}
\DeclareMathAlphabet{\mathpzc}{OT1}{pzc}{m}{it}
\def\End{\operatorname{End}}
\def\ker{\operatorname{Ker}}
\def\id{\operatorname{id}}
\def\id{\operatorname{id}}
\def\C{\mathbb{C}}
\def\R{\mathbb{R}}
\def\Z{\mathbb{Z}}
\def\Q{\mathbb{Q}}
\newtheorem{thm*}{Theorem}
\newtheorem{thm}{Theorem}[section]
\newtheorem{prop}[thm]{Proposition}
\theoremstyle{definition}
\newtheorem{definition}[thm]{Definition}
\theoremstyle{remark}
\newtheorem{remark}[thm]{Remark}
\newtheorem{example}[thm]{Example}
\begin{document}

\date{\today}
\title{Fell algebras, groupoids, and projections}

\author{Robin J. Deeley}
\address{Robin J. Deeley,   Department of Mathematics,
University of Colorado Boulder
Campus Box 395,
Boulder, CO 80309-0395, USA }
\email{robin.deeley@colorado.edu}
\author{Magnus Goffeng}
\address{Magnus Goffeng,
Centre for Mathematical Sciences,
University of Lund,
Box 118, 221 00 LUND, Sweden
}
\email{magnus.goffeng@math.lth.se}
\author{Allan Yashinski}
\address{Allan Yashinski,  Department of Mathematics, University of Maryland, College Park, MD 20742-4015, USA }
\email{ayashins@umd.edu}

\subjclass[2020]{46L85, 46L80}
\thanks{RJD is currently funded by NSF Grant DMS 2000057 and was previously funded by Simons Foundation Collaboration Grant for Mathematicians number 638449. MG was supported by the Swedish Research Council Grant VR 2018-0350.}

\begin{abstract}
Examples of Fell algebras with compact spectrum and trivial Dixmier-Douady invariant are constructed to illustrate differences with the case of continuous-trace $C^*$-algebras. At the level of the spectrum, this translates to only assuming the spectrum is locally Hausdorff (rather than Hausdorff). The existence of (full) projections is the fundamental question considered. The class of Fell algebras studied here arise naturally in the study of Wieler solenoids and applications to dynamical systems will be discussed in a separate paper.
\end{abstract}

\maketitle

\section{Introduction}
Being type I $C^*$-algebras, Fell algebras form a quite well understood class of $C^*$-algebras. Their basic properties are classical and are discussed in detail in \cite{Dix:C*}. The reader familiar with continuous-trace algebras can note that a Fell algebra with Hausdorff spectrum is a continuous-trace algebra. Nevertheless there has been renewed interest in Fell algebras from the perspective of groupoid $C^*$-algebras recently in \cite{CHR, HKS}. This note fits within this line of inquiry. However, our original motivation came from dynamical systems where replacing a compact Hausdorff space by a compact locally Hausdorff space ``resolves'' complicated dynamical behaviour. 

For context we briefly discuss the connection between Fell algebras and Smale space $C^*$-algebras, even if Smale spaces will not play any role in the mathematical content of this note. A Smale space is a certain dynamical system \cite{Rue} to which there are a number of $C^*$-algebras associated, e.g., the stable algebra, the Ruelle algebras, etc (see \cite{Put} for details). In the special case of a Wieler solenoid, the first and third listed authors proved that there is a Fell algebra that plays a fundamental role in understanding the stable algebra and the Ruelle algebras. Exactly what these algebras are is not overly relevant; the point is that results in \cite{DeeYas} reduce the problem of understanding them (and by extension the dynamics of the Wieler solenoid) to understanding the dynamics of a particularly nice Fell algebra, one with compact spectrum and trivial Dixmier-Douady invariant. 

Noticing this situation, the authors of the present paper were hopeful that the class of Fell algebras with compact spectrum and trivial Dixmier-Douady invariant would be sufficiently well-behaved to completely understand the stable algebra and Ruelle algebras in the case of Wieler solenoids, see for example the introduction of \cite{DGY}. By sufficiently well-behaved, we mean that results from the continuous-trace case would generalize without much issue or change to our setting. For instance, one could think that a Fell algebra with compact spectrum and trivial Dixmier-Douady invariant is stably unital (i.e., admits a full projection), and at least when it is stably unital then it is Morita equivalent to a unital algebra associated to a nice groupoid, and finally that if additionally the spectrum is a non-Hausdorff manifold it would share $K$-theoretical features satisfied by a classical manifold. The main goal of this note is to present examples showing that our optimism was misplaced and none of the statements in the previous sentence are true. 

Let us consider the case of continuous-trace $C^*$-algebras with compact spectrum and trivial Dixmier-Douady invariant. The spectrum of such an algebra will be denoted by $X$. In this case the spectrum is Hausdorff and the Morita equivalence class of such an algebra only depends on the spectrum. Furthermore, $C(X)$ is a unital representative in the Morita equivalence class and $C(X) \otimes \mathcal{K}$ is the stable representative in the Morita equivalence class. There is nothing particularly deep about the results in the previous sentence but we encourage the reader to take a moment to appreciate how deeply satisfying the situation is in this case.

Moving to the Fell algebra case (still with compact spectrum and trivial Dixmier-Douady invariant) one has that the spectrum is locally Hausdorff. In \cite{HKS}, it is shown that if $A$ is such an algebra then $A\otimes \mathcal{K}$ admits an explicit groupoid model (see Example \ref{fellex} for the precise construction). They also show that the Morita equivalence class of such an algebra again only depends on the spectrum. As such, the stable algebra in the Morita equivalence class has an explicit model as a groupoid $C^*$-algebra; this is analogous to taking $C(X)\otimes \mathcal{K}$ as the representative in the continuous-trace case.

Our difficulties occur when we ask for a nice unital groupoid model in the Morita equivalence class of such a Fell algebra; this would be analogous to taking $C(X)$ in the continuous-trace case. The term ``nice groupoid model'' is formalized below in Definition \ref{nicegroupoidmodel}. Furthermore, one could ask for additional $K$-theoretical properties of ``manifold-like" Fell algebras pertaining to the existence of noncommutative geometries realizing the Fell algebra geometrically. In this note we provide examples of the following types:
\begin{enumerate}
\item a Fell algebra with compact spectrum, trivial Dixmier-Douady invariant but no non-zero projection (see Section \ref{BrokenHeartEx});
\item a Fell algebra with compact spectrum, trivial Dixmier-Douady invariant, containing non-zero projections but no full projection (see Section \ref{brokenheartofsoleniods});
\item a Fell algebra with compact spectrum, trivial Dixmier-Douady invariant, a full projection but the Morita equivalence class of this algebra does not contain a unital algebra with a nice groupoid model in the sense of Definition \ref{nicegroupoidmodel} (see Section \ref{sec:twistedsphere});
\item a Fell algebra whose spectrum is a compact one-dimensional smooth non-Hausdorff manifold, admitting a surjective local homeomorphism from the circle, has trivial Dixmier-Douady invariant and is Poincar\'e self-dual of even dimension but not of odd dimension (see Section \ref{aababSolenoidFellAlgebra});
\item a Fell algebra whose spectrum is a compact smooth non-Hausdorff manifold, admitting a surjective local homeomorphism from a smooth compact manifold, has trivial Dixmier-Douady invariant, but has infinitely generated $K$-theory (see Section \ref{infgenktheoru})
\end{enumerate}

In summary, if $A$ is a Fell algebra with compact spectrum and trivial Dixmier-Douday invariant, then these examples imply that there might not be any unital representative in the Morita equivalence class of $A$ and even if there is a unital representative it might not have a nice groupoid model. And even when there is a nice groupoid model, the Fell algebra could behave in a $K$-theoretically unexpected way.

The paper is structured as follows. Section \ref{generalstuff} discusses the general setup and how the fundamental results from \cite{CHR, HKS} fit within it. Theorems related to the existence of projections are presented in Section \ref{sometheoremsmsms} and the examples of the various types above are presented in the remaining sections. Originally we were planning to include these examples in a paper about the Fell algebras associated with Wieler solenoids. However, we believe these examples are of independent interest and reemphasize that no knowledge of dynamical systems is required to read the present paper.

\subsection*{Acknowledgements} The authors wishes to thank the anonymous referee for their careful reading and helpful comments.

\section{Fell algebras and their spectrum}
\label{generalstuff}
For the benefit of the reader, we will recall the basic facts about $C^*$-algebras we make use of. None of the results are new and most can be found in \cite{CHR,Dix:C*,HKS}. We have given explicit references whenever possible. For a separable $C^*$-algebra $A$, the spectrum $\hat{A}$ is defined as the set of equivalence classes of irreducible representations (see \cite[Chapter 3]{Dix:C*}). The spectrum $\hat{A}$ can be topologized in several equivalent ways, for instance in the Fell topology or the Jacobson topology. By \cite[Corollary 3.3.8]{Dix:C*}, the spectrum $\hat{A}$ is locally quasi-compact.

We are mainly concerned with Fell algebras. A $C^*$-algebra $A$ is a Fell algebra if every $[\pi_0]\in \hat{A}$ admits a neighborhood $U$ and an element $b\in A$ such that $\pi(b)$ is a rank one projection for all $[\pi]\in U$. Equivalently, $A$ is generated by its abelian elements. See more in \cite[Chapter 3]{HKS}. By \cite[Corollary 3.4]{archsom}, the spectrum of a Fell algebra is locally Hausdorff (i.e., any $[\pi]\in \hat{A}$ has a Hausdorff neighborhood). Since the spectrum of a $C^*$-algebra is locally quasi-compact, we summarize the properties of the spectrum of a Fell algebra as being locally Hausdorff and locally locally compact (see \cite[Chapter 3]{CHR}). 

\begin{example}
\label{fellex}
The Fell algebras that we will concern ourselves with arise in a rather explicit way. The construction can be found in \cite[Corollary 5.4]{CHR}. Suppose $Y$ is a second countable, locally compact Hausdorff space and $\psi:Y\to X$ a surjective local homeomorphism onto a topological space $X$. It follows that $X$ is second countable, locally Hausdorff and locally locally compact. We define the equivalence groupoid:
$$R(\psi):=Y\times_\psi Y:=\{(y_1,y_2)\in Y\times Y: \psi(y_1)=\psi(y_2)\}.$$
We topologize $R(\psi)$ as a groupoid over $Y$ by taking the subspace topology (relative to $R(\psi) \subseteq Y\times Y$). Since $\psi$ is a surjective local homeomorphism, the domain and range maps $d(y_1,y_2):=y_2$ and $r(y_1,y_2):=r_1$ (respectively) are local homeomorphisms. Hence the groupoid $R(\psi)$ is \'etale. Following \cite{CHR}, we form the $C^*$-algebra $C^*(R(\psi))$; we note that in general there are a number of different $C^*$-algebras associated with an \'etale groupoid but  \cite[Theorem A.4]{CHR} implies that they are naturally isomorphic in our rather special situation. Furthermore, by \cite[Theorem 6.1]{CHR}, $C^*(R(\psi))$ is a Fell algebra with vanishing Dixmier-Douady invariant and spectrum $X$. 
\end{example}

The theory of Dixmier-Douady invariants of Fell algebras was introduced and developed in \cite{HKS}, also see \cite{CHR}. We only need to consider Fell algebras with vanishing Dixmier-Douady class in which case the following theorem (see \cite{CHR, HKS}) reduces the problem to a more manageable situation.

\begin{thm}
\label{fellchara}
Let $A$ be a separable Fell algebra with vanishing Dixmier-Douady invariant. Then the locally Hausdorff and locally locally compact space $\hat{A}$ determines $A$ up to stable isomorphism in the sense that whenever $A'$ is a separable Fell algebra with vanishing Dixmier-Douady invariant then a homeomorphism $h:\hat{A}\to \widehat{A'}$ can be lifted to a stable isomorphism $A\otimes \mathbb{K}\cong A'\otimes \mathbb{K}$.
\end{thm}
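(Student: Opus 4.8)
The plan is to reduce the statement to the corresponding result about continuous-trace algebras by passing to a suitable open cover of $\hat A$ by Hausdorff pieces, and then to glue. First I would recall from \cite{HKS} that a Fell algebra is exactly a $C^*$-algebra all of whose quotients by ideals with Hausdorff spectrum over a fixed Hausdorff open set are continuous-trace; more precisely, since $\hat A$ is locally Hausdorff and locally locally compact, I would fix a cover $\{U_i\}$ of $\hat A$ by open, quasi-compact, Hausdorff subsets and let $A_i$ denote the ideal (or rather the subquotient) of $A$ with spectrum $U_i$. Each $A_i$ is then a continuous-trace $C^*$-algebra with spectrum $U_i$, and its Dixmier--Douady class is the restriction of the Dixmier--Douady class of $A$ in the sense of \cite{HKS}; by hypothesis this vanishes, so each $A_i$ is Morita equivalent to $C_0(U_i)$, i.e.\ $A_i \otimes \mathbb{K} \cong C_0(U_i) \otimes \mathbb{K}$ canonically up to the usual ambiguity.

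The key technical point is that the Fell algebra $A$ with vanishing Dixmier--Douady invariant is reconstructed from the $A_i$ together with gluing isomorphisms on overlaps, and that on the stabilized level these gluing data become canonical once the Dixmier--Douady obstruction vanishes. Concretely, I would invoke the description of $A \otimes \mathbb{K}$ as a groupoid $C^*$-algebra from Example \ref{fellex}: because the Dixmier--Douady class vanishes, one can (by \cite{CHR, HKS}) find a second countable locally compact Hausdorff space $Y$ and a surjective local homeomorphism $\psi \colon Y \to \hat A$ with $A \otimes \mathbb{K} \cong C^*(R(\psi)) \otimes \mathbb{K}$. The groupoid $R(\psi) = Y \times_\psi Y$ depends only on the pair $(Y,\psi)$, hence ultimately only on $\hat A$ up to the choice of $(Y,\psi)$. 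So the real content is independence of this choice: given two such pairs $(Y,\psi)$ and $(Y',\psi')$ over homeomorphic bases, one must produce a stable isomorphism $C^*(R(\psi)) \otimes \mathbb{K} \cong C^*(R(\psi')) \otimes \mathbb{K}$.

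For this, I would form the fibre product $Y'' = Y \times_{\hat A} Y'$ (using the homeomorphism $h$ to identify the bases), which again is second countable, locally compact, Hausdorff, and the two projections $Y'' \to Y$, $Y'' \to Y'$ are surjective local homeomorphisms covering $\psi$, $\psi'$. Then $R(\psi'')$ is, as an étale groupoid, equivalent (in the sense of groupoid equivalence / Morita equivalence of groupoids) to both $R(\psi)$ and $R(\psi')$, via the space $Y''$ viewed as a $(R(\psi), R(\psi''))$-equivalence and a $(R(\psi'), R(\psi''))$-equivalence. Groupoid equivalence induces Morita equivalence of the associated $C^*$-algebras, and for separable $C^*$-algebras Morita equivalence is the same as stable isomorphism by the Brown--Green--Rieffel theorem; composing, one gets $A \otimes \mathbb{K} \cong C^*(R(\psi)) \otimes \mathbb{K} \cong C^*(R(\psi'')) \otimes \mathbb{K} \cong C^*(R(\psi')) \otimes \mathbb{K} \cong A' \otimes \mathbb{K}$, and tracking the homeomorphism $h$ through the construction shows the resulting isomorphism lifts $h$.

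The main obstacle I anticipate is not any single one of these steps in isolation but the bookkeeping needed to see that the composite stable isomorphism really does lift the prescribed homeomorphism $h \colon \hat A \to \widehat{A'}$ (rather than merely some homeomorphism), and, relatedly, checking that the vanishing of the Dixmier--Douady class is genuinely what removes the cohomological obstruction to choosing the pair $(Y,\psi)$ in the first place — this is where the theory of \cite{HKS, CHR} has to be cited carefully, since the Dixmier--Douady invariant of a Fell algebra lives in a sheaf-cohomology group built from the locally Hausdorff structure and its vanishing is precisely the statement that a global "trivializing" local homeomorphism $\psi$ exists. Everything else (fibre products of local homeomorphisms, groupoid equivalences from covers, Morita equivalence from groupoid equivalence, stable isomorphism from Morita equivalence) is standard.
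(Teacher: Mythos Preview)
The paper does not give its own proof of this theorem; immediately after the statement it writes ``For a proof of Theorem \ref{fellchara}, see \cite[Theorem 7.13]{HKS}.'' So there is nothing to compare against except the cited reference.

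Your outline is a reasonable reconstruction of how the argument in \cite{HKS, CHR} runs, and the fibre-product step showing that two choices of surjective local homeomorphism $\psi,\psi'$ over homeomorphic bases give groupoid-equivalent (hence stably isomorphic) $C^*$-algebras is correct and standard. One remark on the logic: the step you label ``by \cite{CHR, HKS} one can find $\psi$ with $A\otimes\mathbb{K}\cong C^*(R(\psi))\otimes\mathbb{K}$'' is really where almost all the content lives. The existence of some $\psi:Y\to\hat A$ does \emph{not} require vanishing Dixmier--Douady invariant (that is Theorem \ref{fellcharb}, which holds for any second countable locally Hausdorff locally locally compact $X$); what vanishing of the invariant buys you is precisely the stable isomorphism $A\otimes\mathbb{K}\cong C^*(R(\psi))\otimes\mathbb{K}$, and establishing that is the substance of \cite[Theorem 7.13]{HKS}. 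You flag this yourself in your final paragraph, but be aware that once you grant that step you have essentially granted the theorem --- the fibre-product argument you spell out afterwards is then only verifying consistency of the groupoid model, which is the easier half. In short: your sketch is correct but, like the paper, ultimately defers the crux to \cite{HKS}.
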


For a proof of Theorem \ref{fellchara}, see \cite[Theorem 7.13]{HKS}. While Theorem \ref{fellchara} seems to indicate a functorial relationship, we remark that spectra does not define a functor on all $*$-homomorphisms of Fell algebras. The failure can be seen already on the subcategory of finite-dimensional $C^*$-algebras. Our questions concerning projections can be reformulated as follows: does there exist a $*$-homomorphism $\mathbb{K}\to A\otimes \mathbb{K}$? Notice that for continuous-trace algebras with compact spectrum and trivial Dixmier-Douady invariant, one obtains this $*$-homomorphisms from the map at the level of spectrum given by crushing $X$ to a point. Resolving functoriality issues for Fell algebras poses an interesting problem for future work.

\begin{thm}
\label{fellcharb}
Let $X$ be a second countable, locally Hausdorff and locally locally compact space. Then there is a second countable, locally compact Hausdorff space and a surjective local homeomorphism $\psi:Y\to X$. In particular, $X$ is the spectrum of the separable Fell algebra $C^*(R(\psi))$ with vanishing Dixmier-Douady invariant.
\end{thm}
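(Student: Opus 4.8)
The plan is to realize $X$ as the base of a coproduct of a suitably chosen open cover; apart from one point-set observation, everything is formal. That observation, which follows from the definitions (see \cite[Chapter 3]{CHR}), is: a second countable, locally Hausdorff, locally locally compact space admits a \emph{countable} open cover $\{U_i\}_{i\in\N}$ by open subsets that are locally compact and Hausdorff in the subspace topology. Indeed, around a point $x$ one intersects an open Hausdorff neighbourhood with an open, locally compact neighbourhood of $x$ to produce such a set; second countability (the Lindel\"of property) then lets one pass to a countable subcover. I would record this as the first step.

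Next I would set $Y:=\bigsqcup_{i\in\N}U_i$, the topological disjoint union, and let $\psi\colon Y\to X$ be the map whose restriction to the $i$-th summand is the inclusion $U_i\hookrightarrow X$. Each required property of the pair $(Y,\psi)$ is then essentially automatic. Second countability of $Y$ follows from the union of countable bases of the summands. The key point is that $Y$ is Hausdorff even though $X$ need not be: two points of the same summand $U_i$ are separated inside $U_i$, which is clopen in $Y$, and two points of distinct summands are separated by the summands themselves; similarly, local compactness of $Y$ is checked summand by summand. Surjectivity of $\psi$ is exactly the statement that $\{U_i\}$ covers $X$, and $\psi$ is a local homeomorphism because each summand $U_i$ is clopen in $Y$ and $\psi|_{U_i}$ is the inclusion of an open subset of $X$, hence an open embedding onto $U_i\subseteq X$. (The map $\psi$ is typically far from injective, since the $U_i$ overlap in $X$, but this is harmless.)

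The final assertion of the theorem is then immediate: applying Example \ref{fellex} to the surjective local homeomorphism $\psi\colon Y\to X$ from the second countable, locally compact Hausdorff space $Y$, the algebra $C^*(R(\psi))$ is a separable Fell algebra with vanishing Dixmier-Douady invariant whose spectrum is $X$.

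As for difficulty, there is no serious obstacle. The only place that calls for a little care is the initial point-set lemma, where one must combine ``locally Hausdorff'' with ``locally locally compact'' in the right order: passing to a Hausdorff open subset could in principle spoil local compactness, so one intersects a Hausdorff neighbourhood with a locally compact \emph{open} neighbourhood, rather than the reverse. Everything downstream of producing this good cover is formal, which is precisely why the coproduct construction is the natural approach.
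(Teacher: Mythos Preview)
Your proposal is correct and follows essentially the same approach as the paper: the paper does not prove Theorem~\ref{fellcharb} directly but cites \cite[Corollary 5.5]{CHR} and then spells out, in Proposition~\ref{constructiveprop}, exactly the disjoint-union-of-a-Hausdorff-open-cover construction you describe (in the compact case, with a finite cover in place of your countable one). Your argument simply extends this to the general locally locally compact setting via the Lindel\"of property, which is precisely the construction in \cite{CHR}.
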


Theorem \ref{fellcharb} can be found in \cite[Corollary 5.5]{CHR}. We shall recall the details of the latter in the special case of compact $X$ because it is relevant for our constructions.

\begin{prop}
\label{constructiveprop}
Let $X$ be a second countable, compact, locally Hausdorff space and $\mathfrak{U}=(U_j)_{j=1}^N$ a finite cover of $X$ consisting of open Hausdorff subsets. Define the space $Z_\mathfrak{U}:=\coprod_{j=1}^N U_j$ and the mapping
$$\psi_\mathcal{U}:Z_\mathfrak{U}\to X, \quad\mbox{by declaring}\quad \psi_\mathfrak{U}|_{U_j}=\mathrm{id}_{U_j}.$$
Then $Z_\mathfrak{U}$ is a locally compact Hausdorff space and $\psi_\mathfrak{U}$ is a local homeomorphism. In particular, for any Fell algebra $A$ with vanishing Dixmier-Douady invariant and $\hat{A}\cong X$, there is a stable isomorphism $C^*(R(\psi_{\mathfrak{U}}))\otimes \mathbb{K}\cong A\otimes \mathbb{K}$.
\end{prop}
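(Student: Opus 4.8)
The plan is to verify the topological claims about $Z_\mathfrak{U}$ and $\psi_\mathfrak{U}$ directly, and then invoke Theorem \ref{fellchara} for the final stable isomorphism statement. First I would observe that $Z_\mathfrak{U} = \coprod_{j=1}^N U_j$ is a finite disjoint union of open subsets of $X$; since each $U_j$ is by hypothesis an open Hausdorff subset of the second countable locally compact locally Hausdorff space $X$, each $U_j$ is itself second countable, Hausdorff, and locally compact (local compactness of $U_j$ follows because $X$ is locally locally compact and $U_j$ is open, so around each point of $U_j$ there is a compact neighborhood contained in $U_j$; Hausdorffness makes ``locally locally compact'' into genuine local compactness). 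A finite disjoint union of second countable locally compact Hausdorff spaces is again second countable, locally compact, and Hausdorff, which gives the first assertion.

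Next I would check that $\psi_\mathfrak{U}$ is a local homeomorphism. On the summand $U_j \subseteq Z_\mathfrak{U}$, the map $\psi_\mathfrak{U}$ restricts to $\mathrm{id}_{U_j}$ viewed as the inclusion $U_j \hookrightarrow X$, which is an open embedding since $U_j$ is open in $X$; as $U_j$ is open in $Z_\mathfrak{U}$, this shows every point of $Z_\mathfrak{U}$ has an open neighborhood mapped homeomorphically onto an open subset of $X$, so $\psi_\mathfrak{U}$ is a local homeomorphism. Surjectivity is immediate because $\mathfrak{U}$ is a cover of $X$. Hence $Y := Z_\mathfrak{U}$ with $\psi := \psi_\mathfrak{U}$ satisfies the hypotheses of Example \ref{fellex}, so $C^*(R(\psi_\mathfrak{U}))$ is a separable Fell algebra with vanishing Dixmier-Douady invariant and spectrum homeomorphic to $X$ (separability follows from second countability of $R(\psi_\mathfrak{U}) \subseteq Z_\mathfrak{U} \times Z_\mathfrak{U}$).

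Finally, given any Fell algebra $A$ with vanishing Dixmier-Douady invariant and a homeomorphism $\hat A \cong X$, we have two separable Fell algebras with vanishing Dixmier-Douady invariant, namely $A$ and $C^*(R(\psi_\mathfrak{U}))$, whose spectra are homeomorphic (compose the given homeomorphism $\hat A \cong X$ with the identification $X \cong \widehat{C^*(R(\psi_\mathfrak{U}))}$ from Example \ref{fellex}). Theorem \ref{fellchara} then lifts this homeomorphism to a stable isomorphism $A \otimes \mathbb{K} \cong C^*(R(\psi_\mathfrak{U})) \otimes \mathbb{K}$, which is the desired conclusion. I do not anticipate a serious obstacle here: the only mildly delicate point is being careful that ``locally locally compact'' plus Hausdorff on an open set upgrades to honest local compactness, and that compactness of $X$ guarantees a \emph{finite} Hausdorff open cover exists (so the construction is non-vacuous), but both are routine. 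The substantive content is entirely imported from Example \ref{fellex} and Theorem \ref{fellchara}.
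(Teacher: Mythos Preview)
Your proposal is correct and follows essentially the same route as the paper's proof: verify that each $U_j$ (hence $Z_\mathfrak{U}$) is locally compact Hausdorff, observe that $\psi_\mathfrak{U}|_{U_j}$ is a homeomorphism onto its open image so $\psi_\mathfrak{U}$ is a local homeomorphism, and then invoke Example~\ref{fellex} together with Theorem~\ref{fellchara} for the stable isomorphism. You supply more detail than the paper (the second countability check and the ``locally locally compact plus Hausdorff'' upgrade), but the logical structure is identical.
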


\begin{proof}
Since each set $U_j$ is a locally compact Hausdorff space, it follows that $Z_\mathfrak{U}$ also is. Any point in $Z_\mathfrak{U}$ belongs to some $U_j$ and since $\psi_\mathfrak{U}|_{U_j}$ is a homeomorphism onto its open range, $\psi_\mathfrak{U}$ is a local homeomorphism. The conclusion of the proposition follows from Example \ref{fellex} (see also \cite[Corollary 5.4]{CHR}) and Theorem \ref{fellchara} (see also \cite[Theorem 7.13]{HKS}).
\end{proof}

It is of interest to note that using Theorem \ref{fellchara} and \ref{fellcharb}, and the process in the proof of Proposition \ref{constructiveprop}, one has that if $A$ is a stable Fell algebra with trivial Dixmier-Douady invariant, then it is isomorphic to the groupoid $C^*$-algebra associated to a local homeomorphism as in Example \ref{fellex}. In other words, the Morita equivalence class of a Fell algebra with trivial Dixmier-Douady invariant and spectrum $X$ always contains a representative that is a groupoid $C^*$-algebra obtained from a local homeomorphism $\psi:Y\to X$. The representative defined from $\psi:Y\to X$ is unital if and only if $Y$ is compact. In fact the results in \cite{HKS} are more general; they apply to non-trivial Dixmier-Douady invariant if one allows twisted groupoid $C^*$-algebras obtained from a local homeomorphism. Based on this observation, we make the following definition.

\begin{definition}
\label{nicegroupoidmodel}
Suppose that $X$ is a second countable, locally Hausdorff and compact space. A \emph{nice groupoid model} for $X$ is a groupoid of the form $R(\psi)$ where $\psi:Y\to X$ a surjective local homeomorphism onto $X$ and $Y$ is a compact Hausdorff space.
\end{definition}

The following theorem is a special case of item (3) on page 342 of \cite{Bro}, which gives a complete description of when a continuous-trace $C^*$-algebra is stably isomorphic to a unital one. We state this special case so as to contrast the situation between continuous-trace $C^*$-algebras and Fell algebras obtained from local homeomorphisms.

\begin{thm} \label{conTraceCase}
Suppose that $C^*(R(q))$ is the $C^*$-algebra obtained from a local homeomorphism $q: W \rightarrow Z$ where $W$ is locally compact and Hausdorff and $Z$ is compact and connected. If $Z$ is Hausdorff, then $C^*(R(q))$ is stably isomorphic to $C(Z)$.
\end{thm}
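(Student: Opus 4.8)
The plan is to deduce the statement directly from Theorem~\ref{fellchara} rather than from \cite{Bro}: the point is simply that $C(Z)$ is itself a separable Fell algebra with vanishing Dixmier-Douady invariant whose spectrum is $Z$, and that $C^*(\mathcal{R}_q)$ is another such algebra, so that the identity map of $Z$ lifts to a stable isomorphism.

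First I would observe that $C^*(\mathcal{R}_q)$ is exactly the algebra produced in Example~\ref{fellex} with $Y=W$ and $\psi=q$. Indeed $\mathcal{R}_q = W\times_q W$ is an \'etale groupoid because $q$ is a surjective local homeomorphism, so by \cite[Corollary 5.4]{CHR} the algebra $C^*(\mathcal{R}_q)$ is a Fell algebra with vanishing Dixmier-Douady invariant and with spectrum canonically homeomorphic to $Z$. As throughout this setting we take $W$ to be second countable, so that $C^*(\mathcal{R}_q)$ is separable; since a local homeomorphism is an open continuous surjection, $Z$ is then automatically second countable as well. Next I would check that $C(Z)$ is a separable Fell algebra with the same spectrum: it is unital and commutative, its irreducible representations are the point evaluations and hence one-dimensional, and for each $z_0\in Z$ one may choose an open neighbourhood $U$ of $z_0$ and $b\in C(Z)$ with $b\equiv 1$ on $U$, so that $\pi(b)$ is a rank one projection for every $[\pi]\in U$; moreover its spectrum is $Z$ and, $C(Z)$ being the section algebra of the trivial line bundle over $Z$, its Dixmier-Douady invariant vanishes.

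With these observations in place, Theorem~\ref{fellchara} applied to $A=C^*(\mathcal{R}_q)$, $A'=C(Z)$ and the homeomorphism $h=\mathrm{id}_Z$ lifts $h$ to a stable isomorphism $C^*(\mathcal{R}_q)\otimes\mathbb{K}\cong C(Z)\otimes\mathbb{K}$, which is precisely the claim. I do not expect a genuine obstacle here: the only point that needs care is the verification of the hypotheses of Theorem~\ref{fellchara}, and the one non-formal input is the vanishing of the Dixmier-Douady invariant of $C^*(\mathcal{R}_q)$, which is supplied by Example~\ref{fellex} (that is, by \cite[Corollary 5.4]{CHR}). Beyond that the substance is entirely contained in Theorem~\ref{fellchara}, equivalently in the classical Dixmier-Douady classification of continuous trace $C^*$-algebras, which applies because $Z$ is Hausdorff, so that $C^*(\mathcal{R}_q)$ is in fact continuous trace. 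In particular only Hausdorffness of $Z$ is used; the hypotheses that $Z$ be connected and that we restrict to the continuous trace range are inherited from the more general statement of \cite{Bro} and play no role in this special case.

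If one prefers to bypass Theorem~\ref{fellchara}, the same conclusion follows by observing that $W$ implements a groupoid equivalence between $\mathcal{R}_q$ and the space $Z$: properness of the $\mathcal{R}_q$-action on $W$ is exactly the assertion that $W\times_q W$ is closed in $W\times W$, which holds precisely because $Z$ is Hausdorff, and the orbit space of the action is $Z$. Hence $C^*(\mathcal{R}_q)$ is Morita equivalent to $C(Z)$, and therefore, both algebras being separable, stably isomorphic to it.
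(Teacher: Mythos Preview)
Your argument is correct. The paper does not actually prove this theorem: it simply records it as a special case of \cite[item~(3), p.~342]{Bro} and moves on. Your route is different in that it stays entirely within the machinery already set up in the paper: you invoke Example~\ref{fellex} (i.e.\ \cite[Corollary~5.4]{CHR}) to identify $C^*(\mathcal{R}_q)$ as a separable Fell algebra with trivial Dixmier--Douady invariant and spectrum $Z$, observe that $C(Z)$ is another such algebra, and then apply Theorem~\ref{fellchara} to the identity homeomorphism. This is cleaner for the purposes of this paper, since Theorem~\ref{fellchara} is already on the table and Brown's result need not be unpacked; your alternative via the groupoid equivalence implemented by $W$ is also valid and makes transparent exactly where Hausdorffness of $Z$ enters (properness of the diagonal). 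You are also right that connectedness of $Z$ plays no role in your argument; that hypothesis is a remnant of the formulation in \cite{Bro}, where the general (possibly nontrivial Dixmier--Douady) case is treated.
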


The theorems in the next section and the examples discussed in the succeeding sections illustrate that the situation for Fell algebras (i.e., when $Z$ is locally Hausdorff rather than Hausdorff) is more involved. 

\section{Theorems on the existence of projections in Fell algebras}
\label{sometheoremsmsms}

\begin{thm} 
\label{ExistenceProjectionOpenCompactHaus}
Suppose that $A$ is a Fell algebra with trivial Dixmier-Douady invariant and spectrum $X$. If there exists $K$ a nonempty, compact, open, Hausdorff subset of $X$, then $A \otimes \mathcal{K}$ contains a non-zero projection.
\end{thm}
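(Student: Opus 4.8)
The plan is to exploit the fact that $K$ is at once compact, open, and Hausdorff, so that the piece of $A$ ``living over $K$'' is a continuous trace algebra with compact spectrum, and then to produce the projection there. Concretely, I would first pass to the stable algebra $A\otimes\mathcal{K}$ and realize it via a nice local-homeomorphism model. Since the statement only asserts something about $A\otimes\mathcal{K}$, by Theorem \ref{fellchara} we may replace $A$ by any separable Fell algebra with trivial Dixmier-Douady invariant and spectrum $X$; in particular we may take a model $C^*(R(\psi))$ for a surjective local homeomorphism $\psi:Y\to X$ with $Y$ locally compact Hausdorff (Theorem \ref{fellcharb}), or work directly with the given $A$ and its Dixmier-Douady triviality.

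Next I would isolate the ideal corresponding to the open set $K$. Since $K\subseteq X$ is open, it is the spectrum of an ideal $I\trianglelefteq A$ (by the standard correspondence between open subsets of $\hat A$ and closed two-sided ideals). Because $K$ is Hausdorff, $I$ is a Fell algebra with Hausdorff spectrum, hence a continuous trace $C^*$-algebra; because $K$ is in addition compact, $\hat I = K$ is compact. Moreover the Dixmier-Douady invariant of $I$ is the restriction to $K$ of that of $A$, hence trivial. So $I$ is a continuous trace algebra with compact spectrum and vanishing Dixmier-Douady class, and therefore $I\otimes\mathcal{K}\cong C(K)\otimes\mathcal{K}$; in particular $I\otimes\mathcal{K}$ contains a non-zero — indeed full — projection $p$, coming from crushing $K$ to a point (a rank-one projection in the $\mathcal{K}$ factor tensored with $1_{C(K)}$).

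Finally I would promote this to $A\otimes\mathcal{K}$. Since $K$ is open in $X$, the ideal $I\otimes\mathcal{K}$ is an ideal in $A\otimes\mathcal{K}$, and a projection in an ideal is a fortiori a projection in the ambient algebra; it is non-zero there because it is non-zero in $I\otimes\mathcal{K}$. Hence $A\otimes\mathcal{K}$ contains a non-zero projection, as claimed. (Note it need not be full in $A\otimes\mathcal{K}$ unless $K=X$ — consistent with the examples of type (2) in the introduction.)

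The main obstacle, and the step that needs the most care, is the identification of $I$ as a continuous trace algebra with vanishing Dixmier-Douady invariant: one must check that ``Fell algebra with Hausdorff spectrum'' does give continuous trace (this is classical, e.g.\ via \cite{archsom} or \cite{Dix:C*}), that compactness of $K$ transfers to $\hat I$, and — crucially — that the Dixmier-Douady class of the restriction $I$ is the restriction of the Dixmier-Douady class of $A$ in the sense of \cite{HKS}, so that triviality is inherited. Once that naturality/restriction property of the Dixmier-Douady invariant is in hand, the rest is the standard structure theory of continuous trace algebras with compact spectrum plus the elementary observation that ideals pass projections up to the containing algebra.
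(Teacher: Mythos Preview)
Your argument is correct and reaches the goal, but it takes a different route from the paper's proof, and the step you yourself flag as delicate can in fact be sidestepped entirely.

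The paper argues more directly at the groupoid level: after passing (via Theorem~\ref{fellchara}) to a model $C^*(R(\psi))$ with $\psi:Y\to X$ a surjective local homeomorphism, it simply enlarges $Y$ to $Y\sqcup K$, extending $\psi$ by the identity on $K$. Since $K$ is open and Hausdorff in $X$, the copy of $K$ inside $Y\sqcup K$ is a clopen subgroupoid of $R(\psi)$ (with trivial groupoid structure), so one gets an honest embedding $C(K)\hookrightarrow C^*(R(\psi))$; the unit $1_{C(K)}$ is then the desired non-zero projection. No structure theory of continuous trace algebras and no statement about restriction of the Dixmier--Douady class is invoked.

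Your approach is more abstract: isolate the ideal $I\lhd A$ with $\hat I=K$, recognize $I$ as continuous trace with compact Hausdorff spectrum and trivial Dixmier--Douady class, and pull the projection from $C(K)\otimes\mathcal K$. This is fine, and what it buys is a clean conceptual explanation (the projection really comes from the continuous-trace piece over $K$). The cost is the restriction-of-invariant step, which, while true, is not quite free from \cite{HKS}. Note, however, that you don't actually need it: once you have already replaced $A$ by $C^*(R(\psi))$ as you say in your first paragraph, the ideal over $K$ is precisely $C^*\big(R(\psi|_{\psi^{-1}(K)})\big)$, which by \cite[Corollary~5.4]{CHR} already has trivial Dixmier--Douady invariant by construction. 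So in your own framework the naturality issue evaporates, and the argument closes without appealing to any general restriction property.
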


\begin{proof}
The existence of non-zero projections in $A \otimes \mathcal{K}$ is a Morita invariant statement. Up to Morita equivalence, $A=C^*(R(\psi))$ for some local homeomorphism $\psi: Y \to X$. Upon replacing $Y$ with $Y \sqcup K$ we can assume that $K\subseteq Y$ and that $\psi|_K$ is the identity. Consider $K$ as the trivial groupoid over itself, with domain and range maps being the identity map, so $C^*(K)=C(K)$. We have a clopen groupoid inclusion $K\subseteq R(\psi)$ that induces a faithful $*$-homomorphism $C(K)\to C^*(R(\psi))$. Since $K$ is compact $C(K)$ contains a projection, namely the unit, so its image in $C^*(R(\psi))$ is a non-trivial projection.
\end{proof}

\begin{thm}
Suppose that $A$ is a Fell algebra with trivial Dixmier-Douady invariant and its spectrum $X:=\hat{A}$ admits a surjective local homemorphism $\psi:Y\to X$ where $Y$ is compact and Hausdorff. Then $A$ is stably isomorphic to a unital Fell algebra. 
\end{thm}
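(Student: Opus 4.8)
The plan is to reduce everything to the rigidity statement of Theorem~\ref{fellchara} together with the explicit groupoid model furnished by Example~\ref{fellex}. First I would note that $Y$ is automatically second countable: cover the compact space $Y$ by finitely many open sets on each of which $\psi$ restricts to a homeomorphism onto an open subset of the second countable space $X=\hat A$, and a finite union of second countable subspaces is second countable. Thus $\psi\colon Y\to X$ is a surjective local homeomorphism from a second countable, (compact) locally compact Hausdorff space, so Example~\ref{fellex} applies: it produces the \'etale equivalence groupoid $R(\psi)=Y\times_\psi Y$ and its $C^*$-algebra $C^*(R(\psi))$, which is a separable Fell algebra with vanishing Dixmier--Douady invariant and spectrum (homeomorphic to) $X$.

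Next I would invoke Theorem~\ref{fellchara} with $A'=C^*(R(\psi))$ and the homeomorphism $h=\mathrm{id}\colon \hat A = X \to X = \widehat{C^*(R(\psi))}$. Since both $A$ and $C^*(R(\psi))$ are separable Fell algebras with vanishing Dixmier--Douady invariant, the theorem lifts $h$ to a stable isomorphism $A\otimes\mathbb{K}\cong C^*(R(\psi))\otimes\mathbb{K}$. In other words, $A$ is stably isomorphic to $C^*(R(\psi))$, so it remains only to check that this particular representative is unital.

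Finally I would verify unitality of $C^*(R(\psi))$. Because $R(\psi)$ is \'etale, its unit space, identified with $Y$ via $y\mapsto(y,y)$, is open in $R(\psi)$; since $Y$ is compact, the indicator function $1_Y$ belongs to $C_c(R(\psi))$, and a direct computation with the convolution product shows $1_Y$ is a two-sided identity, hence the unit of $C^*(R(\psi))$. (Alternatively, $R(\psi)$ is a closed subset of the compact space $Y\times Y$, hence compact, which again yields unitality of the \'etale groupoid $C^*$-algebra.) Therefore $C^*(R(\psi))$ is a unital Fell algebra stably isomorphic to $A$, as required.

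I do not expect a genuine obstacle here; the argument is essentially a bookkeeping exercise in applying the results already recorded in the excerpt. The only point deserving a sentence of care is the unitality of a groupoid $C^*$-algebra when the unit space is compact, which is a standard fact about \'etale groupoids, and the minor remark that $Y$ inherits second countability from $X$ so that Theorem~\ref{fellchara} and Example~\ref{fellex} are literally applicable.
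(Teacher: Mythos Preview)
Your proof is correct and follows essentially the same approach as the paper: apply Example~\ref{fellex} to $\psi$ to obtain the Fell algebra $C^*(R(\psi))$ with spectrum $X$ and vanishing Dixmier--Douady invariant, observe it is unital because the unit space $Y$ is compact, and then invoke Theorem~\ref{fellchara} for the stable isomorphism with $A$. You supply more detail than the paper does (the second-countability check for $Y$ and the explicit description of the unit), but the strategy is identical.
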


\begin{proof}
We can apply the construction in Example \ref{fellex} to $\psi: Y \rightarrow X$. The resulting groupoid $C^*$-algebra is unital because its unit space $Y$ is compact. Furthermore, Theorem \ref{fellchara} implies that $A$ is stably isomorphic to this groupoid $C^*$-algebra.
\end{proof}

\section{The $aab/ab$-solenoid}
\label{aababSolenoidFellAlgebra}
An example for which both the previous theorems are relevant was considered in \cite{DeeYas}. Although this example is self-contained, the Fell algebra is related to aab/ab-solenoid, which is a Smale space and the reader can see \cite{DeeYas} for further context for this example. 

The compact, locally Hausdorff space, $X$, is pictured in Figure \ref{Figure-aab/ab-QuotientSpace}. Informally, it is formed by taking the wedge of two circles and then spliting the wedge point into three non-Hausdorff points, denoted $ab, ba, aa$. Open Hausdorff neighborhoods for these three points are pictured in Figure \ref{Figure-aab/ab-OpenNeighborhoods}. 
	
	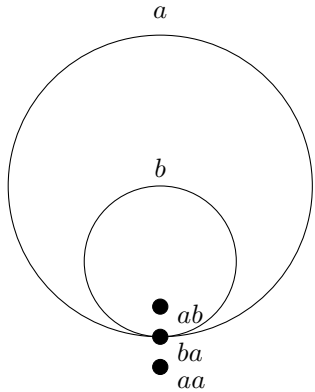
\begin{figure}[h]
		\begin{tikzpicture}
		\draw (0,0) circle [radius=2];
		\draw (0,-1) circle [radius=1];
		\draw[fill] (0,-1.6) circle [radius=0.1];
		\draw[fill] (0,-2) circle [radius=0.1];
		\draw[fill] (0,-2.4) circle [radius=0.1];
		\node [right] at (0.1,-1.7) {$ab$};
		\node [right] at (0.1,-2.2) {$ba$};
		\node [right] at (0.1,-2.6) {$aa$};
		\node [above] at (0,2.1) {$a$};
		\node [above] at (0,0) {$b$};
		\end{tikzpicture}
		\caption{The compact, locally Hausdorff space, $X$.}
		\label{Figure-aab/ab-QuotientSpace}
	\end{figure}
	
	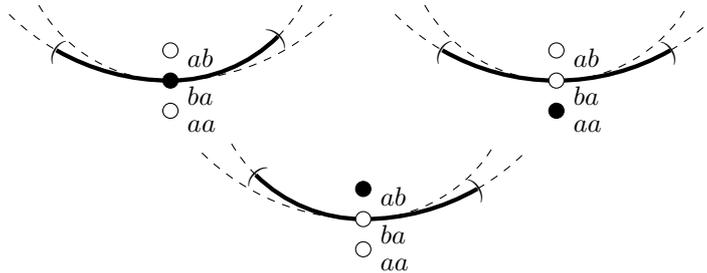
\begin{figure}[h]
		\begin{tikzpicture}
		\draw [dashed] ($(0,0) + (225:3)$) arc (225:240:3);
		\draw [ultra thick] ($(0,0) + (240:3)$) arc (240:270:3);
		\draw [dashed] ($(0,0) + (270:3)$) arc (270:315:3);
		\draw [dashed] ($(0,-1) + (210:2)$) arc (210:270:2);
		\draw [ultra thick] ($(0,-1) + (270:2)$) arc (270:315:2);
		\draw [dashed] ($(0,-1) + (315:2)$) arc (315:330:2);
		\node [rotate=-30] at (-1.5,-2.61) {$($};
		\node [rotate=45] at (1.414,-2.414) {$)$};
		\draw (0,-2.6) circle [radius=0.1];
		\draw[fill] (0,-3) circle [radius=0.1];
		\draw (0,-3.4) circle [radius=0.1];
		\node [right] at (0.1,-2.7) {$ab$};
		\node [right] at (0.1,-3.2) {$ba$};
		\node [right] at (0.1,-3.6) {$aa$};
		\end{tikzpicture} \qquad
		\begin{tikzpicture}
		\draw [dashed] ($(0,0) + (225:3)$) arc (225:240:3);
		\draw [dashed] ($(0,0) + (300:3)$) arc (300:315:3);
		\draw [ultra thick] ($(0,0) + (240:3)$) arc (240:270:3);
		\draw [ultra thick] ($(0,0) + (270:3)$) arc (270:300:3);
		\draw [dashed] ($(0,-1) + (210:2)$) arc (210:270:2);
		\draw [dashed] ($(0,-1) + (270:2)$) arc (270:330:2);
		\node [rotate=-30] at (-1.5,-2.61) {$($};
		\node [rotate=30] at (1.5,-2.61) {$)$};
		\draw (0,-2.6) circle [radius=0.1];
		\draw[fill=white] (0,-3) circle [radius=0.1];
		\draw[fill] (0,-3.4) circle [radius=0.1];
		\node [right] at (0.1,-2.7) {$ab$};
		\node [right] at (0.1,-3.2) {$ba$};
		\node [right] at (0.1,-3.6) {$aa$};
		\end{tikzpicture} \qquad
		\begin{tikzpicture}
		\draw [dashed] ($(0,-1) + (210:2)$) arc (210:225:2);
		\draw [dashed] ($(0,0) + (300:3)$) arc (300:315:3);
		\draw [dashed] ($(0,0) + (225:3)$) arc (225:270:3);
		\draw [ultra thick] ($(0,0) + (270:3)$) arc (270:300:3);
		\draw [ultra thick] ($(0,-1) + (225:2)$) arc (225:270:2);
		\draw [dashed] ($(0,-1) + (270:2)$) arc (270:330:2);
		\node [rotate=-45] at (-1.414,-2.414) {$($};
		\node [rotate=30] at (1.5,-2.61) {$)$};
		\draw[fill] (0,-2.6) circle [radius=0.1];
		\draw[fill=white] (0,-3) circle [radius=0.1];
		\draw (0,-3.4) circle [radius=0.1];
		\node [right] at (0.1,-2.7) {$ab$};
		\node [right] at (0.1,-3.2) {$ba$};
		\node [right] at (0.1,-3.6) {$aa$};
		\end{tikzpicture}
		\caption{Open neighborhoods of the three non-Hausdorff points in $X$.}
		\label{Figure-aab/ab-OpenNeighborhoods}
	\end{figure}

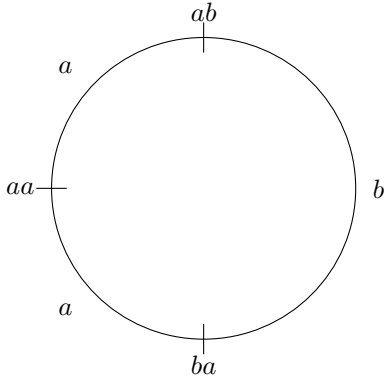
\begin{figure}[h]
\begin{tikzpicture}
\draw (0,0) circle [radius=2];
\draw (0, -2.2) -- (0, -1.8);
\draw (0,1.8) -- (0,2.2);
\draw (-2.2,0) -- (-1.8,0);
\node [below] at (0,-2.1) {$ba$};
\node [above] at (0,2.1) {$ab$};
\node [left] at (-2.1,0) {$aa$};
\node [right] at (2.1,0) {$b$};
\node [left] at (-1.6,-1.6) {$a$};
\node [left] at (-1.6,1.6) {$a$};
\end{tikzpicture}
\caption{The circle with labels determining the map to $X$}
\label{Figure-aab/ab-Circle cover}
\end{figure}

There is a surjective local homeomorphism from the circle to the space $X$. The map is defined using Figure \ref{Figure-aab/ab-Circle cover}. Each open interval in the circle labeled with a maps homeomorphically to the outer circle with the wedge points removed. While the open interval in the circle labeled b maps homeomorphically to the inner circle with the wedge points removed. The image of the other three points is given by labels (which are ab, ba, aa). One can check that the quotient topology obtained from this map gives the topology that was informally defined using Figure \ref{Figure-aab/ab-OpenNeighborhoods}. Since the circle is compact, we see that the previous theorem can be applied to this example. 

In addition, the subset of $X$ given by the outer circle union the point labeled aa is a nonempty, compact, open, Hausdorff subset. Using Theorem \ref{ExistenceProjectionOpenCompactHaus} there is a projection associated to this subset. Summarizing the Fell algebra associated to the local homeomorphism from the circle to $X$ is unital and also contains a non-full, non-zero projection.

\begin{remark}
As seen above, many compact locally Hausdorff spaces admits a surjective local homeomorphism from a compact Hausdorff space. There are however compact, locally Hausdorff spaces which do not. An explicit example is the twisted sphere constructed in \cite[Section 3]{hommelbergbthe}, we discuss it below in Section \ref{sec:twistedsphere}.
\end{remark}

We proceed by extending a $K$-theory computation for the compact, locally Hausdorff space $X$ pictured in Figure \ref{Figure-aab/ab-QuotientSpace} from \cite{DeeYas}. This computation shows that general non-Hausdorff manifolds, even with nice groupoid models, need not share basic $K$-theoretical features satisfied by classical manifolds. The reader can compare this to the Poincar\'e duality results for non-Hausdorff manifolds constructed from simplicial complexes in \cite{KasSkand}. 

\begin{prop}
Let $X$ be the compact, locally Hausdorff space pictured in Figure \ref{Figure-aab/ab-QuotientSpace}. We set $K_*(X):=K^*(A)$ and $K^*(X):=K_*(A)$ for some Fell algebra $A$ with spectrum $X$. Then there are isomorphisms
$$K_0(X)\cong K^0(X)\cong \Z^2 \quad\mbox{and}\quad  K_1(X)\cong K^1(X)\cong \Z.$$
Moreover, $A$ is Poincar\'e self-dual of even dimension and even though $X$ is a smooth compact odd-dimensional non-Hausdorff manifold it is not $KK$-isomorphic to any odd-dimensional smooth compact manifold.
\end{prop}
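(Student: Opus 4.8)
The plan is to reduce everything to the groupoid model from the earlier part of the paper, recall the $K$-theory computation of \cite{DeeYas}, and then extract the remaining assertions from the universal coefficient theorem and from rank counts.

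First I would recall that the circle admits a surjective local homeomorphism $\psi\colon S^1\to X$ (Figure \ref{Figure-aab/ab-Circle cover}), so by Example \ref{fellex} and Theorem \ref{fellchara} one may take $A\otimes\mathcal K\cong C^*(R(\psi))$. The set $\{ab,ba,aa\}$ is closed in $X$, and over its complement $\psi$ is a trivial two-fold cover on the part over the outer circle and a homeomorphism on the part over the inner circle. I would use this to write down the extension
$$0\longrightarrow \bigl(C_0(\R)\otimes M_2\bigr)\oplus C_0(\R)\longrightarrow A\otimes\mathcal K\longrightarrow \C^3\longrightarrow 0,$$
whose six-term exact sequence, since $K_*\!\bigl((C_0(\R)\otimes M_2)\oplus C_0(\R)\bigr)=(0,\Z^2)$ and $K_*(\C^3)=(\Z^3,0)$, collapses to $0\to K_0(A)\to \Z^3\overset{\partial}{\to}\Z^2\to K_1(A)\to 0$. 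Everything then comes down to the exponential map $\partial$: lifting the three minimal projections supported at $ab,ba,aa$ to self-adjoint elements of $C^*(R(\psi))$ and applying $\exp(2\pi i\,\cdot)$, one finds that $\partial$ has rank one with free cokernel (for instance, the projection at $aa$, lying in the interior of the doubled arc, goes to $0$, while those at $ab$ and $ba$ go to $\pm v$ for a single primitive $v\in\Z^2$), so $\ker\partial\cong\Z^2$ and $\operatorname{coker}\partial\cong\Z$. This gives $K^0(X)=K_0(A)\cong\Z^2$ and $K^1(X)=K_1(A)\cong\Z$; this is the computation of \cite{DeeYas}, which one may alternatively just cite. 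Since $A$ is type I it lies in the bootstrap class, so the UCT applies and, $K_*(A)$ being free, yields $K_0(X)=K^0(A)\cong\operatorname{Hom}(\Z^2,\Z)\cong\Z^2$ and $K_1(X)=K^1(A)\cong\operatorname{Hom}(\Z,\Z)\cong\Z$.

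Next, for the Poincar\'e duality statements I would again use that $A$ is in the bootstrap class with finitely generated free $K$-theory: the UCT then supplies a $KK$-equivalence $A\simeq_{KK}\C^2\oplus C_0(\R)$. Each summand is Poincar\'e self-dual of dimension zero --- $\C=C(\mathrm{pt})$ trivially and $C_0(\R)$ by Bott periodicity, as $C_0(\R)\otimes C_0(\R)=C_0(\R^2)\simeq_{KK}\C$ --- and both finite direct sums of dimension-zero Poincar\'e self-dual $C^*$-algebras and algebras $KK$-equivalent to such are themselves Poincar\'e self-dual of dimension zero. Hence $A$ is Poincar\'e self-dual of even dimension. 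If $A$ were Poincar\'e self-dual of odd dimension then the duality class would force an isomorphism $K_0(A)\cong K^1(A)$; but $K^1(A)\cong\operatorname{Hom}(K_1(A),\Z)\cong\Z$ by the UCT while $K_0(A)\cong\Z^2$, so this cannot happen.

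Finally I would observe that $X$ is a compact one-dimensional smooth non-Hausdorff manifold --- every point has a neighbourhood homeomorphic to $\R$, as witnessed by $\psi$, with smooth transition maps --- hence odd-dimensional. If $A$ were $KK$-isomorphic to $C(M)$ for a compact smooth manifold $M$ then $K^*(M)\cong K_*(A)$, so $\operatorname{rank}K^0(M)-\operatorname{rank}K^1(M)=\operatorname{rank}K_0(A)-\operatorname{rank}K_1(A)=2-1=1$; but via the Chern character this difference equals $\chi(M)$, and the Euler characteristic of a closed odd-dimensional manifold is $0$, so $M$ cannot be odd-dimensional. The main obstacle is the rank-one computation of $\partial$ in the first step --- but this is exactly the content of \cite{DeeYas}, so in practice one cites it; a secondary caveat is that the duality arguments presuppose the convention in which ``Poincar\'e self-dual of dimension $n$'' means $KK$-self-duality with an $n$-fold degree shift, which a careful write-up should state explicitly and reference.
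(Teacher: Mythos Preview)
Your proposal is correct and follows essentially the same line as the paper: the same short exact sequence over $\{ab,ba,aa\}$, the same appeal to \cite{DeeYas} for the rank-one boundary map, the UCT for $K$-homology, and a rank mismatch to rule out odd Poincar\'e self-duality. Two small remarks: first, the displayed extension should have $C^*(R(\psi))$ (or everything tensored by $\mathcal{K}$) in the middle, since $A\otimes\mathcal{K}$ is stable while your ideal and quotient are not; this is cosmetic and does not affect the $K$-theory. Second, your Euler characteristic argument for the final claim is a pleasant variant of the paper's, which instead observes that $K^1(A)\otimes\Q\not\cong K_0(A)\otimes\Q$ directly obstructs rational odd-dimensional Poincar\'e self-duality, a property enjoyed by $C(M)$ for any closed odd-dimensional $M$; the two arguments are equivalent via the Chern character.
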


That $A$ is Poincar\'e self-dual of even dimension means that for any $C^*$-algebras $B$ and $C$ there are natural isomorphisms 
$$KK_*(A\otimes B,C)\cong KK_*(B,A\otimes C).$$

\begin{proof}
By the results of \cite{DeeYas}, the stabilization of $A$ fits into a short exact sequence 
\begin{equation}
\label{sesforaaabab}
0\to C_0(\R)\otimes \C^2\otimes \mathbb{K}\to A\otimes \mathbb{K}\to \C^3\otimes \mathbb{K}\to 0.
\end{equation}
The map $A\otimes \mathbb{K}\to \C^3\otimes \mathbb{K}$ is given by evaluation in the three points $ab$, $ba$ and $aa$, cf. Figure \ref{Figure-aab/ab-QuotientSpace}. It was computed in \cite{DeeYas} that the associated boundary mapping on $K$-theory $K_0(\C^3\otimes \mathbb{K})\to K_1(C_0(\R)\otimes \C^2\otimes \mathbb{K})$ is represented by the matrix 
$$\delta_0=
\begin{pmatrix}
-1&1&0\\
1&-1&0
\end{pmatrix},
$$
under the isomorphisms $K_0(\mathbb{K})\cong \Z$ and $K_1(C_0(\mathbb{R}))\cong \Z$. From this a computation with six term exact sequences showed $K_0(A)\cong \Z^2$ and $K_1(A)\cong \Z$ in \cite{DeeYas}. 

The short exact sequence \eqref{sesforaaabab} implies that $A$ is nuclear and in the bootstrap class. In particular, the universal coefficient theorem allows us to deduce that the associated boundary mapping on $K$-homology $K^1(C_0(\R)\otimes \C^2\otimes \mathbb{K})\to K^0(\C^3\otimes \mathbb{K})$ is represented by the matrix 
$$\delta_0^T=
\begin{pmatrix}
-1&1\\
1&-1\\
0&0
\end{pmatrix},
$$
under the isomorphisms $K^0(\mathbb{K})\cong \Z$ and $K^1(C_0(\mathbb{R}))\cong \Z$. A similar computation as in \cite{DeeYas} proves that $K^0(A)\cong\mathrm{coker}(\delta_0^T:\Z^2\to \Z^3)\cong \Z^2$ and $K^1(A)\cong \ker(\delta_0^T:\Z^2\to \Z^3)\cong \Z$. 

Since we have isomorphisms $K_0(A)\cong K^0(A)$ and $K_1(A)\cong K^1(A)$, and $A$ is in the bootstrap category it follows that $A$ is Poincar\'e self-dual of even dimension. The final statement of the proposition follows from that $K^1(A)\otimes \Q\not\cong K_0(A)\otimes \Q$, so $A$ can not be rationally Poincar\'e self-dual of odd dimension but for any smooth compact manifold $M$, $C(M)$ is rationally Poincar\'e self-dual of odd dimension.
\end{proof}

\section{The broken heart} 
\label{BrokenHeartEx} 

We construct a Fell algebra with compact spectrum and trivial Dixmier-Douady invariant but no non-zero projection. The Morita equivalence classes of this algebra and all its ideals do not contain a unital representative.

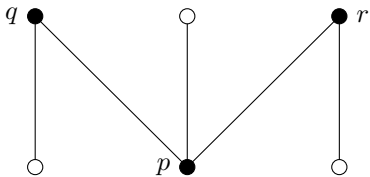
\begin{figure}[h]
	\begin{tikzpicture}
	\draw (0,0) -- (0,2);
	\draw (0,0) -- (-2, 2) -- (-2, 0);
	\draw (0,0) -- (2, 2) -- (2, 0);
	\draw[fill] (0,0) circle [radius=0.1];
	\draw[fill] (-2,2) circle [radius=0.1];
	\draw[fill] (2,2) circle [radius=0.1];
	\draw[fill=white] (0,2) circle [radius=0.1];
	\draw[fill=white] (-2,0) circle [radius=0.1];
	\draw[fill=white] (2,0) circle [radius=0.1];
	\node [left] at (-0.1,0) {$p$};
	\node [left] at (-2.1,2) {$q$};
	\node [right] at (2.1,2) {$r$};
	\end{tikzpicture}
	\caption{$Y$ for broken heart}
	\label{Figure-Y-BrokenHeartAlt}
\end{figure}

Let $Y$ be the space in Figure \ref{Figure-Y-BrokenHeartAlt}. Define an equivalence relation on $Y$ by identifying the three open vertical segments in Figure \ref{Figure-Y-BrokenHeartAlt}.

\begin{figure}[h]
	\begin{tikzpicture}
	\draw (0,0) -- (0,2);
	\draw (0,0) -- (-2, 2) -- (-2, 0);
	\draw (0,0) -- (2, 2) -- (2, 0);
	\draw[fill] (0,0) circle [radius=0.1];
	\draw[fill] (-2,2) circle [radius=0.1];
	\draw[fill] (2,2) circle [radius=0.1];
	\draw[fill=white] (0,2) circle [radius=0.1];
	
	\draw[fill] (0,1) circle [radius=0.1];
	\draw[fill] (-2,1) circle [radius=0.1];
	\draw[fill] (2,1) circle [radius=0.1];
	
	\draw[fill] (1,1) circle [radius=0.1];
	\draw[fill] (-1,1) circle [radius=0.1];

	\draw[fill=white] (-2,0) circle [radius=0.1];
	\draw[fill=white] (2,0) circle [radius=0.1];
	\node [left] at (-0.1,0) {$p$};
	\node [left] at (0,1) {$a_2$};
	\node [left] at (-2,1) {$a_1$};
	\node [left] at (2,1) {$a_3$};
	\node [left] at (1,1) {$y$};
	\node [left] at (-1,1) {$x$};
	\node [left] at (-2.1,2) {$q$};
	\node [right] at (2.1,2) {$r$};
	\end{tikzpicture}
	\caption{$Y$ for broken heart with representative equivalent classes}
	\label{Figure-Y-BrokenHeartWithEquiv}
\end{figure}
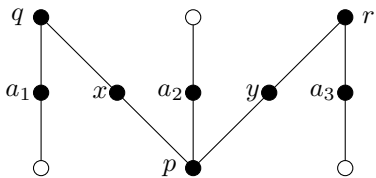

In Figure \ref{Figure-Y-BrokenHeartWithEquiv}, a number of equivalent classes are denoted; the points labelled with $a_1$, $a_2$ and $a_3$ form one equivalence class, the point labelled with an $x$ forms another, the point labelled with a $y$ forms yet another, etc.

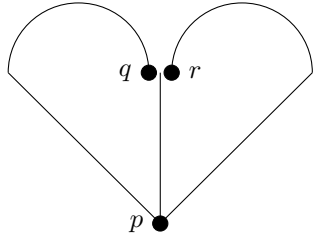
\begin{figure}[h]
	\begin{tikzpicture}
	\draw (0,0) -- (0,2);
	\draw (0,0) -- (-2, 2);
	\draw (0,0) -- (2, 2);
	\draw (-2,2) arc (180:0:0.925);
	\draw (2,2) arc (0:180:0.925);
	\draw[fill] (0,0) circle [radius=0.1];
	\draw[fill] (-0.15,2) circle [radius=0.1];
	\draw[fill] (0.15,2) circle [radius=0.1];
	\node [left] at (-0.1,0) {$p$};
	\node [left] at (-0.25,2) {$q$};
	\node [right] at (0.25,2) {$r$};
	\end{tikzpicture}
	\caption{$X$ for broken heart}
	\label{Figure-X-BrokenHeart}
\end{figure}

The quotient space associated to this equivalence relation will be denoted by $X$, see Figure \ref{Figure-X-BrokenHeart}. We let $A$ denote the Fell algebra with trivial Dixmier-Douady invariant associated with the surjective local homeomorphism $Y\to X$ defined from the quotient.

One way to show that this example admits no projections is to use a six-term exact sequence to prove that $K_0(A)\cong 0$. The detail of this $K$-theory computations are similar to ones in \cite{DeeYas} so we only provide an outline. There is an ideal of $A$ obtained from the union of the three open vertical line segments, see Figure \ref{Figure-Y-BrokenHeartAlt}. This ideal is isomorphic to $C_0((0,1))\otimes M_3(\C)$. The quotient of $A$ by this ideal is isomorphic to $C([0,1])$. Leading to the short exact sequence of $C^*$-algebras
\[
0 \rightarrow C_0((0,1))\otimes M_3(\C) \rightarrow A \rightarrow C([0,1]) \rightarrow 0.
\]  
The six-term exact sequence of $K$-theory groups reduces to
\[ 0 \rightarrow K_0(A) \rightarrow \Z \rightarrow \Z \rightarrow K_1(A) \rightarrow 0.
\]
Then, one checks that the boundary map from $\Z$ to $\Z$ is an isomorphism. In particular, we have that $K_0(A) \cong 0$.

Finally, since $K_0(A)\cong 0$, it follows that $A \otimes \mathbb{K}$ has no nonzero projections, because such a projection would have to be nonzero in some irreducible representation. It is worth noting that this example implies that the result \cite[bottom of page 342, item (3)]{Bro} cannot be generalized to Fell algebras (even with trivial Dixmier-Douady invariant).

\section{The broken heart of the $aab/ab$-solenoid}
\label{brokenheartofsoleniods}

We combine the examples of Sections \ref{aababSolenoidFellAlgebra} and \ref{BrokenHeartEx} to obtain a Fell algebra with compact spectrum, trivial Dixmier-Douady invariant, no full projection but with a nonzero projection. The Morita equivalence class of this Fell algebra does not contain a unital representative. 

Let $\pi_1: Y_1 \rightarrow X_1$ and $\pi_2: Y_2 \rightarrow X_2$ be as in Section \ref{aababSolenoidFellAlgebra} and \ref{BrokenHeartEx}, respectively. Taking the wedge product of $Y_1$ and $Y_2$ as in Figure \ref{Figure-wedgeOfTwoSpaces}, one obtains a local homeomorphism $\pi : Y_1 \vee Y_2 \rightarrow  X_1 \vee X_2$. It is important to note that the choice of wedge point is not arbitrary; we have taken it to be on the arc labelled with a $b$ in $Y_1$ and the point labelled by $p$ in $Y_2$. In particular, using this choice of wedge point ensures that the map $\pi$ is a local homeomorphism. 

\begin{figure}[h]
	\begin{tikzpicture}
	\draw (0,0) -- (0,2);
	\draw (0,0) -- (-2, 2) -- (-2, 0);
	\draw (0,0) -- (2, 2) -- (2, 0);
	\draw[fill] (0,0) circle [radius=0.1];
	\draw[fill] (-2,2) circle [radius=0.1];
	\draw[fill] (2,2) circle [radius=0.1];
	\draw[fill=white] (0,2) circle [radius=0.1];
	\draw[fill=white] (-2,0) circle [radius=0.1];
	\draw[fill=white] (2,0) circle [radius=0.1];
	\node [below] at (0,-0.1) {$p$};
	\node [left] at (-2.1,2) {$q$};
	\node [right] at (2.1,2) {$r$};
	
\draw (0,-2) circle [radius=2];
\draw (0, -4.2) -- (0, -3.8);
\draw (1.8, -2) -- (2.2, -2);
\draw (-2.2,-2) -- (-1.8,-2);

\node [below] at (0, -4.1) {$aa$};
\node [left] at (-2.1,-2) {$ab$};
\node [right] at (2.1,-2) {$ba$};
\node [left] at (-1.6,-3.6) {$a$};
\node [right] at (1.6,-3.6) {$a$};
	\end{tikzpicture}
	\caption{The space $Y_1 \vee Y_2$}
	\label{Figure-wedgeOfTwoSpaces}
\end{figure}
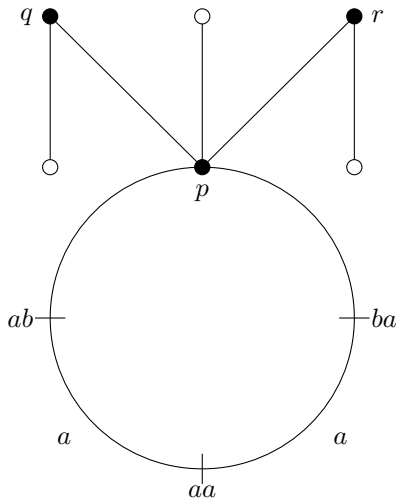

Following the notation in Example \ref{fellex}, the associated Fell algebra is denoted by $C^*(R(\pi))$. Its unit space is $Y_1\vee Y_2$ and its spectrum is $X_1 \vee X_2$ where the wedge point is on the $b$-circle of $X_1$ (see Figure \ref{Figure-aab/ab-QuotientSpace}) and the point labelled $p$ in $X_2$ (see Figure \ref{Figure-X-BrokenHeart}). It follows from Theorem \ref{ExistenceProjectionOpenCompactHaus} with $K\subseteq X_1$ being the $a$-circle crossing through $aa$ (in the same way as in Section \ref{aababSolenoidFellAlgebra}) that $C^*(R(\pi))$ contains a non-zero projection.

We now show that $C^*(R(\pi))$ does not contain a full projection. As a subset of $Y_1 \vee Y_2$, $Y_2$ is closed and is invariant with respect to the groupoid $R(\pi)$ (that is, if $(w,z)\in R(\pi)$ then $w \in Y_2$ if and only if $z\in Y_2$). These two conditions imply that the restriction map defined on compactly supported functions extends to a $*$-homomorphism $\rho : C^*(R(\pi)) \rightarrow C^*(R(\pi_2))$. Since $C^*(R(\pi_2))$ contains only the zero projection, it follows that if $p$ is a projection in $C^*(R(\pi))$ then $\rho(p)=0$. It follows from this observation and \cite[Lemma 1.1]{Bro} that $C^*(R(\pi))$ does not contain a full projection.

\section{The twisted sphere}
\label{sec:twistedsphere}

Next we construct a Fell algebra with compact spectrum, trivial Dixmier-Douady invariant, a full projection but its spectrum does not admit a surjective local homeomorphism from a compact Hausdorff space. As a result, there is no nice groupoid model constructed from a local homeomorphism in the Morita equivalence class of this Fell algebra. 

This example is based on a topological space constructed by Hommelberg in \cite[Section 3]{hommelbergbthe}. It might be useful for the reader to have a copy of \cite{hommelbergbthe} on hand when considering the construction below, see in particular Figures 3.1 and 3.2 (c) on pages 10 and 11 of \cite{hommelbergbthe}.

Consider the equivalence relation $\sim$ on $S^2 = \{ (x,y,z) \in \R^3 ~:~ x^2 + y^2 + z^2 = 1\}$ generated by
\[ (x,y,z) \sim (-x,-y,z) \qquad \text{if } z \neq 0. \]  Let $X$ be the quotient space $S^2/{\sim}$, which we will call the ``twisted sphere".  $X$ is like $S^2$, except it has a ``long equator" that wraps around the sphere twice.  The space $X$ is a compact, non-Hausdorff manifold.  In particular, it is a locally Hausdorff space.  The quotient map $q: S^2 \to X$ is not nice for our purposes, because it is not a local homeomorphism at the poles.  Hence we cannot use it to construct a Fell algebra whose spectrum is $X$. In fact, one of the main results in \cite{hommelbergbthe} is the following:
\begin{thm}(Proposition 3.3.4 in \cite{hommelbergbthe}) \label{noLocalHomeoTwisted} There is no surjective local homeomorphism from a compact Hausdorff space to the space $X$ defined in the previous paragraph.
\end{thm}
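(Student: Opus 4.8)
The plan is to derive the obstruction from a single ``collapsing'' map rather than from a delicate analysis of hypothetical resolutions $Y$. Let $\rho\colon S^2\to S^2$ be the rotation by $\pi$ about the $z$-axis, which generates the group implicitly used in the definition of $X$; its only fixed points are the two poles. The equivalence relation defining $X$ is coarser than the orbit relation of $\langle\rho\rangle$, so the orbit projection $S^2\to S^2/\langle\rho\rangle$ factors through a continuous surjection $\bar\pi\colon X\to S^2/\langle\rho\rangle$ with $\bar\pi\circ q=\pi$, where $q\colon S^2\to X$ and $\pi\colon S^2\to S^2/\langle\rho\rangle$ are the two quotient maps. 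The quotient $S^2/\langle\rho\rangle$ is homeomorphic to $S^2$ (e.g.\ it is the suspension of $\{z=0\}/\langle\rho\rangle\cong S^1$); in particular it is compact, simply connected and Hausdorff. Write $E\subseteq X$ for the long equator, the image under $q$ of the equator $\{z=0\}$ of $S^2$ (on which $q$ is injective), and $\bar E:=\bar\pi(E)$ for the corresponding ordinary equator of $S^2/\langle\rho\rangle$. The first claim is that $\bar\pi$ is a surjective local homeomorphism, that $\bar\pi^{-1}(\bar E)=E$, and that $\bar\pi|_E\colon E\to\bar E$ is exactly the connected double covering $S^1\to S^1$.

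Granting the claim, suppose for contradiction that $\psi\colon Y\to X$ is a surjective local homeomorphism with $Y$ compact Hausdorff. Then $\bar\pi\circ\psi\colon Y\to S^2/\langle\rho\rangle$ is a surjective local homeomorphism between compact Hausdorff spaces, hence closed, hence proper, hence a covering map. As $S^2/\langle\rho\rangle$ is simply connected, each connected component of $Y$ is carried homeomorphically onto $S^2/\langle\rho\rangle$; fixing one such component $Y_0$ and letting $h\colon Y_0\xrightarrow{\ \sim\ }S^2/\langle\rho\rangle$ be the induced homeomorphism, the map $\sigma:=\psi|_{Y_0}\circ h^{-1}\colon S^2/\langle\rho\rangle\to X$ satisfies $\bar\pi\circ\sigma=\id$, i.e.\ it is a continuous section of $\bar\pi$. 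Since $\bar\pi^{-1}(\bar E)=E$, the restriction $\sigma|_{\bar E}$ takes values in $E$ and is a continuous section of the double covering $\bar\pi|_E\colon E\to\bar E$. But a connected double cover of the circle admits no continuous section: the image of such a section is open (sections of covering maps have open image), closed (it is the equalizer of two continuous maps into the Hausdorff space $E$, and hence equals that image) and nonempty, so it is all of $E$ by connectedness, forcing the cover to be a homeomorphism. This contradiction proves the theorem.

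The substantive point, and the step most in need of care, is the verification that $\bar\pi$ is a local homeomorphism, which is really a question about the local structure of $X$ at its three kinds of points. Away from the equator and the poles $q$ is locally injective, so $\bar\pi$ is locally a homeomorphism there for trivial reasons. At a point $\bar e\in E$ one takes a small ball $B$ about the corresponding equator point of $S^2$: the only point identified with $v\in B$ is $\rho(v)$, which lies near the antipodal equator point and hence outside $B$, so $q|_B$ is injective, $q(B)$ is an open neighbourhood of $\bar e$, and under $q(B)\cong B$ the map $\bar\pi$ becomes $\pi|_B$, a homeomorphism onto an open set; the same computation shows the two points of $E$ over a point of $\bar E$ are precisely the non-separated pair in $X$, so $\bar\pi|_E$ is the connected double cover and not two disjoint circles. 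At a pole $N=q(0,0,1)$, a small ball $B$ about the north pole meets $\{z>0\}$ only, so $q(B)=B/\langle\rho\rangle$ is an open neighbourhood of $N$ and $\bar\pi|_{q(B)}$ is the tautological homeomorphism onto the corresponding neighbourhood of the pole in $S^2/\langle\rho\rangle$ --- this is the one place where $q$ itself fails to be a local homeomorphism but $\bar\pi$ does not, which is precisely why one works with $\bar\pi$. The remaining inputs --- the identification $\bar\pi^{-1}(\bar E)=E$ (a computation with $z$-coordinates) and the fact that a surjective local homeomorphism between compact Hausdorff spaces is a covering map --- are routine, but worth isolating since the whole argument turns on descending from the non-Hausdorff space $X$ to the genuinely Hausdorff sphere $S^2/\langle\rho\rangle$.
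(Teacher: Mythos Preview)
Your argument is correct. The paper itself does not supply a proof of this theorem, deferring instead to Hommelberg's thesis, so there is no in-paper argument to compare against; on its own merits your route---collapsing $X$ further to the Hausdorff quotient $S^2/\langle\rho\rangle\cong S^2$, using simple connectivity to force any compact Hausdorff resolution to produce a global continuous section of $\bar\pi$, and then reading off a contradiction on the equator where $\bar\pi$ restricts to the connected double cover $S^1\to S^1$---is clean and complete. Each ingredient checks out: $\bar\pi$ is a local homeomorphism at all three types of points (your case analysis at the poles is the crucial one), a surjective local homeomorphism from a compact Hausdorff space to a Hausdorff space is a finite covering, and the nonexistence of a section of the connected double cover of the circle is immediate from $\pi_1$.

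Two small points of wording. First, the equivalence relation defining $X$ is \emph{finer} than the $\langle\rho\rangle$-orbit relation, not coarser: on the equator it makes strictly fewer identifications. This is exactly why the factorisation goes the way you need, $X\to S^2/\langle\rho\rangle$, so the slip is purely terminological. Second, in the last step your equalizer description of $\sigma(\bar E)$ is correct (it is $\{e\in E:\sigma(\bar\pi(e))=e\}$), but it is quicker simply to note that $\sigma(\bar E)$ is compact in the Hausdorff circle $E$, hence closed; alternatively, bypass the open/closed argument entirely by observing that a section of the double cover would give a right inverse to multiplication by~$2$ on $\pi_1(S^1)\cong\Z$.
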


We now return to the construction of the Fell algebra(s) with spectrum $X$. Let $U = S^2\setminus \{ (0,0,1), (0,0,-1)\}$. Then $q: U \to X$ is a local homeomorphism.  We need to also cover the poles of $X$ using a local homeomorphism.  Let
\[ E = \{(x,y,z) \in S^2 ~:~ z=0\} \]
denote the equator of $S^2$, so that $q(E)$ is the equator of $X$.  Let
\[ V = S^2 \setminus E.\]
We will frequently use cylindrical coordinates to refer to points on $S^2$.  Each point on $S^2$ other than the poles has unique cylindrical coordinates $(\theta, z)$ such that $0 \leq \theta < 2\pi$.  Using these unique coordinates, consider the (discontinuous) map
\[ \alpha: V \to S^2,\qquad \alpha(\theta, z) = (\theta/2, z), \] which also maps the north pole to the north pole and the south pole to the south pole.  Note that the composition
\[ q \circ \alpha: V \to X \]
is a continuous and one-to-one map of $V$ onto $X\setminus E$. One can show that $q \circ \alpha$ is a local homeomorphism; we omit the details.

We can use the two local homeomorphisms $q: U \to X$ and $q \circ \alpha: V \to X$ to construct a surjective local homeomorphism from $U \sqcup V$ to $X$.  From this, we can construct a groupoid whose $C^*$-algebra is a Fell algebra whose spectrum is $X$.  It can be shown that this Fell algebra does not have a nonzero projection.  

However, using a different space $Y$ and local homeomorphism, one can construct a different groupoid $C^*$-algebra for this space that does have a nonzero projection. The construction proceeds as follows.

Let $U$ and $V$ be as above.  Let $V_1 = V$ and $V_2 = V$ be two copies of $V$.  Let
\[ Y = U \sqcup V_1 \sqcup V_2, \]
which is a locally compact Hausdorff space. In fact, it is a (non-connected) manifold. We define a local homeomorphism
\[ \psi: Y \to X \] by
\[ \psi\rvert_U = q: U \to X,\qquad \psi\rvert_{V_i} = q\circ \alpha: V_i \to X. \]

Let us describe the equivalence relation $\sim$ induced by $\psi$.  Since all elements of $U, V_1, V_2$ are elements of $S^2$, we shall try to avoid confusion by denoting elements of $U$ by $u$ and elements of $V_1$ and $V_2$ by $v_1$ and $v_2$ respectively.  Given an element $u = (x,y,z)$ of $U$, let $\overline{u} = (-x,-y,z) \in U$.  Equivalently, if $u$ is given by $(\theta, z)$ in cylindrical coordinates, then $\overline{u}$ is given by $(\theta+\pi, z)$.  Assuming $u \notin E$, then $q(u) = q(\overline{u})$ where recall that $E = \{(x,y,z) \in S^2 ~:~ z=0\}$.  
\begin{itemize}
	\item A point $u \in E \subseteq U$ is only equivalent to itself.  We have $[u] = \{u\}$.
	\item A point $u \in U\setminus E$ has four members in its equivalence class.  We have $u \sim u$ and $u \sim \overline{u}$.  If $(\theta, z)$ are the cylindrical coordinates of $u \in U$, then let $v_i \in V_i = V$ be the point with cylindrical coordinates $(2\theta, z)$ (for $i=1,2$).  Note that $\alpha(v_i)$ equals either $u$ or $\overline{u}$.  Either case leads to the conclusion that $q(\alpha(v_i)) = q(u)$, so that $u \sim v_i$.  So $[u] = \{u, \overline{u}, v_1, v_2\}$.
	\item Suppose $v_1 \in V_i$ is one of the poles $(0,0,1)$ or $(0,0,-1)$, and $v_2$ is the same pole, but in the set $V_2$.  Then $[v_1] = \{v_1, v_2\}$.
\end{itemize}
This is a complete description of each equivalence class of the equivalence relation $\sim$ induced by $\psi$.  Note that there is one equivalence class for each point of $X$.

Let $A = C^*(R(\psi))$ be the groupoid $C^*$-algebra associated to this equivalence relation via the process in Example \ref{fellex}. As such, $A$ is a Fell algebra with trivial Dixmier-Douady invariant and spectrum $X$. 

We shall explicitly construct a full projection $p \in A$.  We define
\begin{itemize}
	\item $p(u,u) = 1-|z|$, where $u \in U$ has cylindrical coordinates $(\theta, z)$.
	\item $p(v_1, v_1) = |z|$, where $v_1 \in V_1 = V$ has cylindrical coordinates $(\theta, z)$.
	\item $p(v_2, v_2) = |z|$, where $v_2 \in V_2 = V$ has cylindrical coordinates $(\theta, z)$.
	\item $p(u,\overline{u}) = 0$ for any $u \in U\setminus E$.
	\item $p(v_1, v_2) = p(v_2, v_1) = 0$ for $v_1 \in V_1$ and $v_2 \in V_2$.
	\item $p(u, v_1) = \sqrt{\frac{1}{2}|z|(1-|z|)}$, where $u \in U$ has cylindrical coordinates $(\theta, z)$.  Note that if $u \sim v_1$, as we are assuming, then $z$ is also the cylindrical coordinate of $v_1 \in V_1$.
	\item $p(v_1, u) = \overline{p(u,v_1)} = \sqrt{\frac{1}{2}|z|(1-|z|)}$.
	\item $p(u, v_2) = e^{-i\theta}\sqrt{\frac{1}{2}|z|(1-|z|)}$, where $u \in U$ has cylindrical coordinates $(\theta, z)$.  Note that if $u \sim v_2$, as we are assuming, then $z$ is also the cylindrical coordinate of $v_2 \in V_2$.
	\item $p(v_2, u) = \overline{p(u,v_2)} = e^{i\theta}\sqrt{\frac{1}{2}|z|(1-|z|)}$, with $\theta, z$ as above.
\end{itemize}
By construction, $p(y,x) = \overline{p(x,y)}$ for all $x \sim y$, implying that $p$ is self-adjoint.  Next, we will show that $p * p = p$ where $*$ denotes the convolution product.
\begin{itemize}
	\item If $u \in E \subseteq U$, then $[u] = \{u\}$ and
	\[ (p * p)(u,u) = p(u,u)p(u,u) = (1-0)^2 = 1 = p(u,u). \]
	\item If $u \in U \setminus E$, then $[u] = \{u, \overline{u}, v_1, v_2\}$ and
	\begin{align*}
	(p * p)(u,u) &= p(u,u)p(u,u) + p(u,\overline{u})p(\overline{u},u) \\
	&\qquad\quad  + p(u,v_1)p(v_1,u) + p(u,v_2)p(v_2,u)\\
	&= |p(u,u)|^2 + 0 + |p(u,v_1)|^2 + |p(u,v_2)|^2\\
	&= (1-|z|)^2 + \frac{1}{2}|z|(1-|z|) + \frac{1}{2}|z|(1-|z|)\\
	&= (1 - |z|)(1 - |z|) + |z|(1 - |z|)\\
	&= 1 - |z|\\
	&= p(u,u)
	\end{align*}
	\item If $v_1 \in V_1$ is either the north or south pole, so that $[v_1] = \{v_1, v_2\}$ where $v_2$ is the same pole in $V_2$, then
	\begin{align*}
	(p * p)(v_1,v_1) &= p(v_1,v_1)p(v_1,v_1) + p(v_1,v_2)p(v_2,v_1)\\
	&= |p(v_1,v_1)|^2 = 1^2 = p(v_1,v_1).
	\end{align*}
	Similarly, $(p*p)(v_2,v_2) = p(v_2,v_2)$.
	\item Let $v_1 \in V_1$ be a point that is not the north or south pole.  Let $v_2$ be the matching point in $V_2$ and let $u \in U$ be some point equivalent to $v_1$, so that $[v_1] = \{u, \overline{u}, v_1, v_2\}$.  Note that the cylindrical $z$-coordinate is the same for each point in $[v_1]$.  So
	\begin{align*}
	(p * p)(v_1, v_1) &= p(v_1,u)p(u,v_1) + p(v_1, \overline{u})p(\overline{u}, v_1) \\
	&\qquad\quad + p(v_1, v_1)p(v_1, v_1) + p(v_1, v_2)p(v_2, v_1)\\
	&= |p(v_1,u)|^2 + |p(v_1, \overline{u})|^2 + |p(v_1, v_1)|^2 + 0\\
	&= \frac{1}{2}|z|(1-|z|) + \frac{1}{2}|z|(1-|z|) + |z|^2\\
	&= |z|(1-|z|) + |z|^2\\
	&= |z| = p(v_1, v_1).
	\end{align*}
	Similarly,
	\begin{align*}
	(p * p)(v_2, v_2) &= |p(v_2,u)|^2 + |p(v_2, \overline{u})|^2 + 0 + |p(v_2, v_2)|^2\\
	&= \frac{1}{2}|z|(1-|z|) + \frac{1}{2}|z|(1-|z|) + |z|^2\\
	&= |z| = p(v_2, v_2).
	\end{align*}
	This shows that $p*p = p$ on the unit space.
	\item Let $v_1 \in V_1$ be one of the poles, and let $v_2 \in V_2$ be the same pole.  Then
	\[(p * p)(v_1, v_2) = p(v_1, v_1)p(v_1,v_2) + p(v_1, v_2)p(v_2, v_2) = 0 + 0 = p(v_1, v_2).\]
\end{itemize}
For the remainder, let $u \in U \setminus E$, so that $[u] = \{u, \overline{u}, v_1, v_2\}$.  As before, all points have the same $z$-coordinate.  Note that if $\theta$ denotes the angle for $u$, then the angle for $\overline{u}$ is $\theta + \pi$.
\begin{itemize}
	\item 
	\begin{align*}
	(p * p)(u, \overline{u}) &= p(u,u)p(u,\overline{u}) + p(u,\overline{u})p(\overline{u},\overline{u}) \\
	&\qquad\quad + p(u,v_1)p(v_1,\overline{u}) + p(u,v_2)p(v_2,\overline{u})\\
	&= 0 + 0 + \sqrt{\frac{1}{2}|z|(1-|z|)}\sqrt{\frac{1}{2}|z|(1-|z|)}  \\
	&\qquad\quad + e^{-i\theta}\sqrt{\frac{1}{2}|z|(1-|z|)}e^{i(\theta + \pi)}\sqrt{\frac{1}{2}|z|(1-|z|)}\\
	&= \frac{1}{2}|z|(1-|z|) - \frac{1}{2}|z|(1-|z|)\\
	&= 0 = p(u, \overline{u}).
	\end{align*}
	\item
	\begin{align*}
	(p * p)(u, v_1) &= p(u,u)p(u,v_1) + p(u, \overline{u})p(\overline{u},v_1) \\
	&\qquad\quad + p(u,v_1)p(v_1,v_1) + p(u,v_2)p(v_2,v_1)\\
	&= (1-|z|)\sqrt{\frac{1}{2}|z|(1-|z|)} + 0 + \left(\sqrt{\frac{1}{2}|z|(1-|z|)}\right)|z| + 0\\
	&= \sqrt{\frac{1}{2}|z|(1-|z|)} = p(u, v_1).
	\end{align*}
	\item
	\begin{align*}
	(p * p)(u, v_2) &= p(u,u)p(u,v_2) + p(u, \overline{u})p(\overline{u},v_2) \\
	&\qquad\quad + p(u,v_1)p(v_1,v_2) + p(u,v_2)p(v_2,v_2)\\
	&= (1-|z|)e^{-i\theta}\sqrt{\frac{1}{2}|z|(1-|z|)} + 0 + 0 + \left(e^{-i\theta}\sqrt{\frac{1}{2}|z|(1-|z|)}\right)|z|\\
	&= e^{-i\theta}\sqrt{\frac{1}{2}|z|(1-|z|)} = p(u, v_2).
	\end{align*}
	\item
	\begin{align*}
	(p * p)(v_1, v_2) &= p(v_1,u)p(u,v_2) + p(v_1,\overline{u})p(\overline{u},v_2)  \\
	&\qquad\quad +p(v_1,v_1)p(v_1,v_2) + p(v_1,v_2)p(v_2,v_2)\\
	&= \sqrt{\frac{1}{2}|z|(1-|z|)}e^{-i\theta}\sqrt{\frac{1}{2}|z|(1-|z|)} \\
	&\qquad\quad + \sqrt{\frac{1}{2}|z|(1-|z|)}e^{-i(\theta+\pi)}\sqrt{\frac{1}{2}|z|(1-|z|)}+0+0\\
	&= e^{-i\theta}\frac{1}{2}|z|(1-|z|) - e^{-i\theta}\frac{1}{2}|z|(1-|z|)\\
	&= 0 = p(v_1, v_2).
	\end{align*}
\end{itemize}
This completes the proof that $p*p = p$ upon noticing that $p*p$ and $p$ are self-adjoint, so equalities of the form $(p*p)(v_1,u) = p(v_1,u)$ follow from the fact that $(p*p)(u,v_1) = p(u,v_1)$.

It is clear that $p$ is a nonzero projection. To see that it is full, notice for each $x\in X$ there exists $(y_1, y_2) \in R(\psi)$ such that $\psi(y_1)=\psi(y_2)=x$ and $p(y_1,y_2)\neq 0$. It then follows from the fact that $X$ is the spectrum of $C^*(R(\psi))$ and \cite[Lemma 1.1]{Bro} that $p$ is full. 

The $C^*$-algebra $p C^*(R(\psi))p$ is unital and since $p$ is full, it is Morita equivalent to $C^*(R(\psi))$. However, using the discussions just before Definition \ref{nicegroupoidmodel} and Theorem \ref{noLocalHomeoTwisted}, any unital Fell algebra Morita equivalent to $C^*(R(\psi))$ (in particular, $p C^*(R(\psi))p$) does not have a nice groupoid model in the sense of Definition \ref{nicegroupoidmodel}. 

\section{Compact non-Hausdoff manifolds with infinitely generated $K$-theory}
\label{infgenktheoru}

Finally we construct a Fell algebra with infinitely generated $K$-theory but with spectrum being a smooth compact non-Hausdorff manifold, more precisely it will be the image of a smooth compact connected manifold under a local homeomorphism. 

Our construction involves a more general setup (where the domain might be disconnected). Let $k$ be an integer strictly greater than one, $M$ be a compact, connected manifold, and $p: W \rightarrow M$ be a $k$-fold covering map. It follows that $W$ is a manifold but might not be connected (e.g., when $p$ is the trivial $k$-fold covering map). 

Let $U$ be an open set in $M$ that is evenly covered by $p$ so that $p^{-1}(U)$ is the disjoint union of open sets $(U_i)_{i=1}^k$ in $W$ where $p|_{U_i}$ is a homeomorphism onto $U$. Fix $A \subseteq U$ a closed subset of $M$ and notice that $p^{-1}(A)$ is the disjoint union of $(A_i)_{i=1}^k$ where $A_i=U_i\cap p^{-1}(A)$.  We define an equivalence relation on $W$ as follows:
\begin{enumerate}
\item $w \sim w$ for each $w\in W$;
\item $w \sim z$ when $p(w)=p(z) \not\in A$.
\end{enumerate}
Informally the quotient space associated to this equivalence relation (denoted by $W/\sim$) is obtained by taking $M$ and breaking each point in $A$ into $k$-points while leaving points in the complement alone. 

We define $X:=W/\sim.$ The construction satisfies the following:
\begin{enumerate}
\item The map $q: W \rightarrow X$ is a local homeomorphism;
\item $X$ is a compact non-Hausdorff manifold;
\item $X$ is independent of the choice of $k$-fold covering map of $M$ that evenly covers a neighborhood of $A$.
\end{enumerate} 
If $E\to M$ denotes the vector bundle defined from $C(M;E):=C(W)$ (with $C(M)$-action defined from $q$), the Fell algebra $C^*(R(q))$ can be identified with the algebra 
\[
\{ f \in C(M,\End(E)) \: | \: f(a) \text{ is diagonal for all }a \in A\}.
\]
The condition that $f(a)$ is diagonal for all $a \in A$ follows since $U$ is evenly covered by $p$, and the choice of open sets $(U_i)_{i=1}^k$ defines a trivialization $E|_U\cong U\times \C^k$ in which we make sense of the condition that $f(a)$ is diagonal. We set
$$K^*(X):=K_*(C^*(R(q))).$$

\begin{prop}
Let $X$ be the compact non-Hausdorff manifold constructed above from a $k$-fold covering of a compact manifold $M$. Then there is an isomorphism 
$$K^*(X)\cong K^*(M)\oplus K^*(A)^{k-1}.$$
In particular, $K^*(X)$ is infinitely generated if and only if $K^*(A)$ is.
\end{prop}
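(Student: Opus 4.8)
The plan is to compute $K_*(C^*(R(q)))$ from the concrete model $C^*(R(q))\cong B:=\{f\in C(M,\End(E))\mid f(a)\ \text{is diagonal for all}\ a\in A\}$ recalled above, by realizing $B$ as a pullback of $C^*$-algebras and running Mayer--Vietoris. Using the trivialization $E|_U\cong U\times\C^k$ afforded by the sheets $(U_i)_{i=1}^k$ of $p$ over $U$, and writing $D_k\subseteq M_k(\C)$ for the diagonal subalgebra, we have $C(A,\End(E|_A))\cong C(A,M_k(\C))$ with the diagonal matrix functions corresponding to $C(A,D_k)\cong C(A)^k$. Since the inclusion $\iota\colon C(A)^k\hookrightarrow C(A,M_k(\C))$ is injective, $B$ is (isomorphic to) the pullback of
\[
C(M,\End(E))\xrightarrow{\ \mathrm{res}\ }C(A,M_k(\C))\xleftarrow{\ \iota\ }C(A)^k ,
\]
where $\mathrm{res}$ is restriction of sections to $A$. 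As $\mathrm{res}$ is surjective (Tietze extension for sections of a bundle, using $A\subseteq U$), the Mayer--Vietoris six-term exact sequence applies:
\[
\cdots\to K_j(B)\to K_j(C(M,\End(E)))\oplus K_j(C(A)^k)\xrightarrow{\ \psi_j\ }K_j(C(A,M_k(\C)))\to K_{j-1}(B)\to\cdots
\]

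Next I would identify the outer terms via Morita equivalence. Because $p$ is a $k$-fold covering, $E$ has constant rank $k$, so $\Gamma(E)$ is a full Hilbert $C(M)$-module and $C(M,\End(E))=\End_{C(M)}(\Gamma(E))$ is Morita equivalent to $C(M)$; similarly $C(A,M_k(\C))\sim C(A)$, while $K_*(C(A)^k)=K^*(A)^k$ tautologically. Restricting the imprimitivity bimodule $\Gamma(E)$ to $A$ matches these equivalences, so under the identifications $K_*(C(M,\End(E)))\cong K^*(M)$ and $K_*(C(A,M_k(\C)))\cong K^*(A)$ the map $\mathrm{res}_*$ becomes the restriction $r\colon K^*(M)\to K^*(A)$; and since the diagonal matrix units $e_{11},\dots,e_{kk}$ all represent the same class in $K_0(M_k(\C))$, the map $\iota_*$ becomes the fold map $\Sigma\colon K^*(A)^k\to K^*(A)$, $(y_1,\dots,y_k)\mapsto\sum_i y_i$.

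Thus $\psi_j$ is, up to sign, $(x;y_1,\dots,y_k)\mapsto r(x)-\sum_i y_i$, which is already surjective on the $\Sigma$-coordinate; hence the connecting maps in Mayer--Vietoris vanish and the sequence breaks into short exact sequences
\[
0\to K_j(B)\to K^j(M)\oplus K^j(A)^k\xrightarrow{\ \psi_j\ }K^j(A)\to 0 .
\]
Finally $\ker\psi_j=\{(x;y_1,\dots,y_k):\sum_i y_i=r(x)\}$, and forgetting $y_1$ (which is then determined by $y_1=r(x)-y_2-\cdots-y_k$) gives an isomorphism $\ker\psi_j\cong K^j(M)\oplus K^j(A)^{k-1}$. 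This proves $K^*(X)=K_*(B)\cong K^*(M)\oplus K^*(A)^{k-1}$.

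For the last assertion, $M$ is a compact manifold and hence homotopy equivalent to a finite CW complex, so $K^*(M)$ is finitely generated; since $k\geq 2$, $K^*(A)^{k-1}$ is a direct sum of $k-1\geq 1$ copies of $K^*(A)$, which is finitely generated if and only if $K^*(A)$ is, whence $K^*(X)=K^*(M)\oplus K^*(A)^{k-1}$ is infinitely generated precisely when $K^*(A)$ is. I expect the only real work to be the Morita-equivalence bookkeeping: one must verify that the imprimitivity bimodules chosen for $C(M,\End(E))\sim C(M)$ and $C(A,M_k(\C))\sim C(A)$, together with the tautological identification of $K_*(C(A)^k)$, are mutually compatible with $\mathrm{res}$ and $\iota$, so that $\mathrm{res}_*=r$ and $\iota_*=\Sigma$ on the nose with the correct normalizations. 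Everything else is formal.
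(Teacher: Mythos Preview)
Your argument is correct, and it follows a genuinely different route from the paper's proof. The paper first invokes item (3) of the construction---that $X$ is independent of the choice of $k$-fold cover---to reduce to the \emph{trivial} cover, so that $C^*(R(q))\cong\{f\in C(M,M_k(\C)):f(a)\text{ diagonal for }a\in A\}$. It then uses the surjection $\pi\colon C^*(R(q))\to C(A)^{k-1}$, $f\mapsto (f_{22}|_A,\ldots,f_{kk}|_A)$, shows via the Five Lemma that the top-left corner inclusion $C(M)\hookrightarrow\ker\pi$ is a $K$-isomorphism, and splits the resulting six-term sequence using the obvious inclusion $C^*(R(q))\hookrightarrow C(M,M_k(\C))$.

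Your approach instead keeps the general bundle $E$, realizes $C^*(R(q))$ as the pullback $C(M,\End(E))\times_{C(A,M_k(\C))}C(A)^k$, and runs Mayer--Vietoris. The upshot is the same splitting, obtained by noting that the fold map $\Sigma$ is already surjective. What your method buys is that you never need the reduction to the trivial cover (item~(3)), at the cost of the Morita bookkeeping you flag at the end; what the paper's method buys is that, after the reduction, every map is completely explicit and no bimodule compatibilities need to be checked. Both routes are short; the paper's is slightly more hands-on, yours slightly more structural.
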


\begin{proof}
Since $X:=W/\sim$ is independent of the choice of $k$-fold covering map, the Fell algebras associated to two different choices of $k$-fold covers of $M$ are Morita equivalent. We can therefore assume that $q$ is the trivial covering and that the Fell algebra $C^*(R(q))$ is isomorphic to 
\[
\{f \in C(M,M_k(\C)) \: | \: f(a) \text{ is diagonal for all }a \in A\}
\]
We will use the notation $f_{ij}$ to refer to the entries of the matrix-valued function $f \in C^*(R(q))$.  Define a $*$-homomorphism
\[ \pi: C^*(R(q)) \to C(A)^{k-1},\qquad \pi(f)(a) = (f_{22}(a), f_{33}(a), \ldots , f_{kk}(a)). \]
Then $\pi$ is surjective and its kernel $I = \ker \pi$ consists of all $f \in C^*(R(q))$ with the property that $f(a)$ has the form 
$$f(a) = \begin{pmatrix}
f_{11}(a) & 0 & \ldots & 0\\
0 & 0 & \ldots & 0\\
\vdots & \vdots & \ddots & \vdots\\
0 & 0 & \ldots & 0
\end{pmatrix} \quad\mbox{whenever $a \in A$.}$$

We claim that the upper left corner inclusion $\iota: C(M) \to I$ induces an isomorphism on $K$-theory.  We have a commutative diagram of short exact sequences
\[ \xymatrix{
	0 \ar[r] &C_0(M\setminus A) \ar[r] \ar[d]_-{\iota} &C(M) \ar[r] \ar[d]_-{\iota} &C(A) \ar[r] \ar[d]_-{\id} & 0\\
	0 \ar[r] &C_0(M\setminus A, ~M_k(\C)) \ar[r] & I \ar[r] &C(A) \ar[r] & 0\\
} \]
where the maps onto $C(A)$ are restrictions and the map 
$$\iota: C_0(M\setminus A) \to C_0(M\setminus A, ~M_k(\C)),$$ 
is also the corner inclusion. The commutative diagram of $K$-theory groups is
\[ \xymatrix{
	\cdots \ar[r] &K_i(C_0(M\setminus A)) \ar[r] \ar[d]_-{\cong} &K_i(C(M)) \ar[r] \ar[d]_-{\iota_*} &K_i(C(A)) \ar[r] \ar[d]_-{\cong} & \cdots\\
\cdots \ar[r] &K_i(C_0(M\setminus A, M_k(\C))) \ar[r] & K_i(I) \ar[r] &K_i(C(A)) \ar[r] & \cdots\\
} \]
So $\iota_*: K_*(C(M))\to K_*(I)$ is an isomorphism by the Five Lemma.  Combining this with the six-term exact sequence induced by $\pi: C^*(R(q))\to C(A)^{k-1}$ gives a six-term exact sequence
\[ \xymatrix{
	K_0(C(M)) \ar[r] & K_0(C^*(R(q))) \ar[r] & K_0(C(A))^{k-1} \ar[d]\\
	K_1(C(A))^{k-1} \ar[u] & K_1(C^*(R(q))) \ar[l]& K_1(C(M)) \ar[l]
} \]
in which the map $\iota_*: K_*(C(M)) \to K_*(C^*(R(q)))$ is induced by inclusion into the upper left corner.  However, $\mu \circ \iota_* = \id$ where $\mu$ is the composition
\[ \mu: K_*(C^*(R(q))) \to K_*(C(M,M_k(\C))) \to K_*(C(M)) \]
of the map induced by inclusion and the inverse of the $K$-theory isomorphism induced by corner inclusion.  Thus the sequence is split-exact, and we obtain isomorphisms
\[ K^*(X) \cong K^*(M) \oplus K^*(A)^{k-1}. \]
Finally, since the manifold $M$ is compact, $K^*(M)$ is finitely generated and therefore  $K^*(X)$ is infinitely generated if and only if the space $A$ has infinitely generated $K$-theory.
\end{proof}

For an explicit example, one can take $W=M = \R/\Z$, $p$ the two-fold covering map of the circle by the circle and $A = \{1/n \: | \: n \geq 10\}\cup\{0\}$. The Fell algebra associated to this input will have non-finitely generated $K$-theory but its spectrum is a compact non-Hausdorff manifold. The unit space of the groupoid used to construct this Fell algebra is the circle. Many other examples can be constructed using this setup.


\end{document}